
\documentclass[11pt,a4paper,reqno]{amsart}

\usepackage[utf8]{inputenc}
\usepackage[a4paper, lmargin=0.12\paperwidth, rmargin=0.12\paperwidth, tmargin=0.11\paperheight, bmargin=0.11\paperheight]{geometry}
\usepackage{amsfonts,amsmath,amssymb,amsthm,dsfont,amsxtra,enumerate,multicol,mathtools,tkz-berge,hyperref,graphicx}

\theoremstyle{definition}
\newtheorem{theorem}{Theorem}[section]
\newtheorem{cor}{Corollary}[section]
\newtheorem{lemma}{Lemma}[section]
\newtheorem*{claim*}{Claim}
\newtheorem{prop}{Proposition}[section]
\newtheorem{example}{Example}

\newtheorem{definition}{Definition}[section]
\newtheorem{remark}{Remark}[section]
\newtheorem{notation}{Notation}[section]
\newtheorem{setup}{Set-up}[section]
\numberwithin{equation}{section}

\DeclareMathOperator{\pmd}{pmd}
\DeclareMathOperator{\rank}{Rank}

\title{On Positive Matching Decomposition Conjectures of Hypergraphs}

\author[Marie Amalore Nambi]{Marie Amalore Nambi}
\address{Sabanci University, Faculty of Engineering and Natural Sciences, Orta Mahalle, Tuzla, 34956, Istanbul, Turkey}
\email{amalore.p@gmail.com, amalore.pushparaj@sabanciuniv.edu}

\author[Neeraj Kumar]{Neeraj Kumar}
\address{Department of Mathematics, Indian Institute of Technology Hyderabad, Kandi, Sangareddy - 502285, INDIA}
\email{neeraj@math.iith.ac.in}

\subjclass[2020]{{Primary 05C70, 13F65, 13F70, 13C40}; Secondary {05C75, 05E40}} 







\keywords{LSS-ideal, complete intersection, matching, positive matching, alternate walk.}

\date{September 2024}

\begin{document}

\maketitle

\begin{abstract}
In this paper, we prove the conjectures of Gharakhloo and Welker  \cite[Conjecture 3.5 and Conjecture 3.6]{GW2023} that the positive matching decomposition number $(\pmd)$ of a $3$-uniform hypergraph is bounded from above by a polynomial of degree $2$ in terms of the number of vertices. Moreover, we derive a lower bound for $\pmd$ specifically for complete $3$-uniform hypergraphs. Additionally, we obtain an upper bound for $\pmd$ of $r$-uniform hypergraphs. As an application from an algebraic point of view, we obtain the radical, complete intersection, and prime properties of Lov\'{a}sz$-${S}aks$-${S}chrijver (LSS) ideals of $r$-uniform hypergraphs. For an $r$-uniform hypergraphs $H=(V,E)$ such that $\lvert e_i\cap e_j\rvert \leq 1$ for all $e_i,e_j \in E$, we give a characterization of positive matching in terms of strong alternate closed walks. For a specific class of hypergraphs, we classify the radical and complete intersection properties of LSS ideals. 
\end{abstract}

\section{Introduction}
Let $\mathbb{K}$ be a field, and $n\geq 1$ be an integer. Let $H=(V=[n],E)$ be a hypergraph such that $E$ is a clutter; that is, the sets in $E$ are pairwise incomparable with respect to inclusion. For an integer $d\geq 1$ and $e\subset [n]$ we consider the polynomial 
$$f_e^{(d)}=\sum_{j=1}^{d}\prod_{i\in e}x_{ij}$$
in the polynomial ring $S=\mathbb{K}[x_{ij}\mid i\in[n],j\in [d]]$. The ideal 
$$L_H^{\mathbb{K}}(d)=(f_e^{(d)} \mid e \in E) \subset S$$
is called the Lov\'{a}sz$-${S}aks$-${S}chrijver ideal (cf. \cite{HW2015}). We refer to it as LSS-ideal in short. Geometrically, when $H$ is a graph, the ideal $L_H^{\mathbb{K}}(d)$ coincides with the variety of orthogonal representations of the graph complementary to $H$ (cf. \cite{LSS89}). Lov\'{a}sz introduced the notion of orthogonal representations of graphs (cf. \cite{L79}). Many graph theoretical properties, like connectivity and chromatic number are related to orthogonal representations (cf. \cite{L2019}). The variety of orthogonal representations was initially studied by Lov\'{a}sz, {S}aks and {S}chrijver (cf. \cite{LSS89,LSS00}). For $d=1$, the ideal $L_H^{\mathbb{K}}(d)$ coincides with the edge ideal of $H$ (see for example, \cite{F2002, HV2008}). For $r=2$ and $d=2$, some algebraic properties of LSS-ideals, such as radical, prime, primary decomposition, complete intersection, and almost complete intersection, are studied in terms of combinatorial invariants of a graph (cf. \cite{HW2015, AK2021}).

A homogeneous ideal $I \subset S$ is said to be a complete intersection if $\mu(I) = ht(I)$, where $\mu(I)$ denotes the cardinality of a minimal homogeneous generating set of $I$. Characterizing complete intersection LSS ideals of a graph is crucial, as it provides essential insights into the characterization of radical, prime, and almost complete intersection properties of LSS ideals (cf. \cite{ ANC2023, CW2019, AK2021}). In \cite{CW2019}, the authors provide a characterisation for $L_H^{\mathbb{K}}(d)$ being radical, complete intersection and prime when $H$ is a tree and $d\geq2$. In \cite{ANC2023}, for $d\geq2$, the authors characterise almost complete intersection property of $L_H^{\mathbb{K}}(d)$ when $H$ is a forest, unicyclic, and bicyclic graphs. In Section \ref{sec.conj}, for $r=3$, we obtain a pattern on positive integer $d$ such that the ideal $L_H^{\mathbb{K}}(d)$ is a radical complete intersection. Our first main result is as follows:

\begin{theorem} \label{main1}
    Let $H$ be a $3$-uniform hypergraph. If $d \geq \frac{3}{2}n^2-\frac{15}{2}n+10$, then: 
    \begin{enumerate}
        \item $L_H^{\mathbb{K}}(d)$ is a radical complete intersection;
        \item $L_H^{\mathbb{K}}(d+1)$ is a prime.
    \end{enumerate}
\end{theorem}

In a similar fashion, for $r\geq 4$ and $d\geq 1$, we obtain condition on $d$ such that the ideal $L_H^{\mathbb{K}}(d)$ is a radical complete intersection. We prove Theorem \ref{main1} using a combinatorial invariant called the positive matching decomposition number of $H$, which was introduced by Conca and Welker (cf.  \cite{CW2019}).
 
We recall a subset $M \subseteq E$ is said to be \textit{matching} if $e \cap e'=\emptyset$ for all $e,e'\in M$ and $e \neq e'$. A \textit{positive matching} of hypergraph $H$ is a matching $M \subseteq E$  such that there exists a weight function $w: V \rightarrow \mathbb{Q}$ satisfying:
    \begin{equation} \label{pmd_condition}
      \sum_{i\in e} w(i)>0 \text{ if } e \in M, \hspace{2cm} \sum_{i\in e} w(i)<0 \text{ if } e \in E\setminus M. 
    \end{equation}
A \textit{positive matching decomposition} (or pm-decomposition) of $H$ is a partition $E = \cup_{i=1}^{p} M_i$ into pairwise disjoint subsets such that $M_i$ is a positive matching on $(V, E \setminus \cup_{j=1}^{i-1} M_j)$ for $i = 1,\ldots,p$. The $M_i$ are called the parts of the pm-decomposition. The smallest $p$ for which $H$ admits a pm-decomposition with $p$ parts will be denoted by $\pmd(H)$ (cf. \cite[Definition 5.1, Definition 5.3]{CW2019}).

The following implications establish a connection between $\pmd$ and the algebraic characteristics of LSS-ideals of hypergraphs:
\begin{equation} \label{eq.main}
    \pmd(H)\leq d \implies L_H^{\mathbb{K}}(d) \text{ is radical complete intersection } \implies L_H^{\mathbb{K}}(d+1) \text{ is prime.}
\end{equation}
The above implications are known for graphs due to Conca and Welker (see \cite[Theorem 1.3]{CW2019}). For hypergraphs, one may observe that the first implication follows from \cite[Lemma 5.5 and Proposition 2.4]{CW2019}. The second implication is proved in \cite[Theorem 1.2]{GW2023}. As a result, the concept of the $\pmd$ for hypergraphs assumes a crucial role as a noteworthy graph-theoretical invariant with versatile applications spanning the domains of algebra and geometry.

For an integer $r>1$, a hypergraph $H=(V,E)$ is said to $r$-uniform if $\lvert e\rvert = r$, for every $e\in E$. 
In \cite[Theorem 5.4(1)]{CW2019}, the authors provided a linear upper bound of $\pmd(H)$ for a $2$-uniform hypergraphs. Gharakhloo and Welker established a matching decomposition for the $3$-uniform complete hypergraph $H=(V,E)$ with $n$ vertices and $\binom{n}{3}$ edges. Specifically, the authors proved that for every $3 \leq l_1 \leq 2n - 3$ and $5 \leq l_2 \leq 2n - 1$,
$E_{l_1,l_2} = \{\{a, b, c\} \in E \mid a < b < c, a + b = l_1, b + c = l_2\}$
is a matching and $E=\cup_{l_1,l_2}E_{l_1,l_2}$ (cf. \cite[Proposition 3.4]{GW2023}). Moreover, the authors proposed the following conjectures: 

\begin{theorem} \cite[Conjecture 3.5]{GW2023} \label{conjpmd}
    Let $H$ be a complete $3$-uniform hypergraph with $n$ vertices. Then $E_{l_1,l_2} = \{\{a, b, c\} \in E \mid a < b < c, a + b = l_1, b + c = l_2\}$ is a positive matching.
\end{theorem} 

\begin{theorem} \label{conj2}
\cite[Conjecture 3.6]{GW2023}
        Let $H=(V, E)$ be a $3$-uniform hypergraph with $n$ vertices. Then $\pmd(H) \leq \frac{3}{2}n^2-\frac{15}{2}n+10$.
\end{theorem}

Note that Theorem \ref{main1} follows from Theorem \ref{conj2} and  Theorem \ref{conj2} follows from Theorem \ref{conjpmd} and \cite[Proposition 3.4]{GW2023}. In Section \ref{sec.conj}, we provide an affirmative answer to Theorem \ref{conjpmd}. Moreover, we obtain a lower bound for $\pmd$ of a complete $3$-uniform hypergraph using a linear polynomial in the number of vertices, specifically $9n-35$ when $n \geq 5$ (see Theorem \ref{lowerboundpmd}). We demonstrate with an example that for a $3$-uniform hypergraph $H$ on $[7]$ vertices, $\pmd(H)$ is $28$ which is strictly less than $31$, the bound given in Theorem \ref{conj2}; refer to Example \ref{pmdstrict}.


Notice that, in \cite[page-4042]{GW2023}, the authors mentioned that ``\textit{one can speculate that in general, for $r$-uniform hypergraphs $H$, the value of $\pmd(H)$ is bounded from above by a polynomial with a degree of $r-1$ in terms of the number $n$ of vertices}". In Subsection \ref{subsecpumd}, we derive a positive matching decomposition for the complete $r$-uniform hypergraphs for all $r \geq 4$. Notably, we obtain an upper bound value of $\pmd$ for $r$-uniform hypergraphs  (see Theorem \ref{thm.runibound}).

In \cite{FG2022}, the authors provide necessary and sufficient conditions for a matching of a graph to be a positive matching using alternating closed walks. Subsequently, the authors obtained $\pmd$ of complete multipartite graphs, bipartite graphs, cacti, and more. In \cite{GW2023}, the authors obtain the $\pmd$ of $r$-uniform hypertree, which is described as ``\textit{more restrictive compared to other hypertree definitions in the literature}" \cite[page-4038]{GW2023}. 

In Section \ref{secpmd}, we introduce the notion of strong alternating closed walk for hypergraphs. Let $H=(V,E)$ be an $r$-uniform hypergraph such that $\lvert e_i\cap e_j\rvert \leq 1$, for all $e_i,e_j \in E$ with $e_i\neq e_j$. Then, we prove that a matching $M$ in a hypergraph $H$ is positive if and only if the subgraph induced by $M$ does not contain any strong alternate closed walk (see Theorems \ref{thm.3-uni} and \ref{thm.r-uni}). We also introduce the notion of the good forest. A hypergraph $H$ is said to be a good forest if there exists a sequence of edges $e_1,\dots,e_m$ such that $\lvert V_{\{e_1,\ldots,e_i\}}\cap V_{e_{i+1}}\rvert \leq 1$, for all $1\leq i \leq m-1$, where $m=\lvert E\rvert$. The motivation for this condition is purely algebraic. Specifically, it facilitates obtaining a Borel-fixed ideal with the same Hilbert series as the LSS ideal of a good forest. We obtain the exact value of $\pmd$ for good forest, loose cycle hypergraphs and hypergraphs obtained from a hypergraph by adding pendant edges. Lastly, in Theorem \ref{Thm.cs} we prove that the LSS ideal of a good forest is the Cartwright-Sturmfels ideal (see Definition \ref{def.cs}). As an application, we obtain our final result as follows:

\begin{theorem} \label{thm.goodfor}
    Let $H$ be an $r$-uniform good forest. Then:
    \begin{enumerate}
        \item $L_H^{\mathbb{K}}(d)$ is radical for all $d$.
        \item $L_H^{\mathbb{K}}(d)$ is a complete intersection if and only if $d\geq \Delta(H)$.
        \item $L_H^{\mathbb{K}}(d)$ is prime if $d\geq \Delta(H)+1$.
    \end{enumerate}
\end{theorem}

\section{On Conjecture} \label{sec.conj}

In this section, we prove conjectures (Theorem \ref{conjpmd} and Theorem \ref{conj2}) proposed by Gharakhloo and Welker (cf. \cite{GW2023}). Namely, the $\pmd(H)$ for a $3$-uniform hypergraph $H$ is bounded from above by a quadratic function in the number of vertices. First, we recall relevant notations and known results.

\begin{remark} \cite[Lemma 5.2]{CW2019}  \label{Rem.pos}
    Let $H = (V, E)$ be a hypergraph, $M \subseteq E$ and $V_M = \cup_{A \in M}A$.    
    \begin{enumerate}[(a)]
        \item $M$ is a positive matching for $H$ if and only if $M$ is a positive matching for the induced hypergraph $(V_M, {A \in E \mid A \subseteq V_M})$.
        \item Assume $M$ is a positive matching on $H$ and $A \in E$ is such that $M_1 = M \cup \{A\}$  is a matching. Assume also that there is a vertex $a \in A$ such that $\{B \in E \mid B \subset V_{M_1}$ and $a \in B\} = \{A\}$. Then $M \cup \{A\}$ is a positive matching of $H$.
    \end{enumerate}
\end{remark}

\begin{setup}  \label{setup}
Let $H = (V, E)$ be the complete $3$-uniform hypergraph on $n$ vertices and with $\binom{n}{3}$
edges. Then for every $3 \leq l_1 \leq 2n - 3$ and $5 \leq l_2 \leq 2n - 1$, set $
E_{l_1,l_2} = \{\{a, b, c\} \in E \mid a < b < c, a+b = l_1, b+c = l_2\}$.
\end{setup}

\begin{remark} \label{form}
    Let $H$ be a complete $3$-uniform hypergraph as in Set-up \ref{setup}. Any element within $E_{l_1,l_2}$ takes on the following structure:
    $$\{l_1-l_2+\lambda,l_2-\lambda,\lambda\},$$
    where $3\leq \lambda \leq n$, $l_1-l_2+\lambda<l_2-\lambda<\lambda$, and $l_1-l_2+\lambda \geq 1$.
\end{remark}

\begin{remark} \cite[Proposition 3.4]{GW2023} \label{Rem.conj}
    Let $H$ be a complete $3$-uniform hypergraph as in Set-up \ref{setup}. Then $E_{l_1,l_2}$ is a  matching and $E = \cup_{l_1,l_2}E_{l_1,l_2}$. Moreover, the cardinality of the set $E_n=\{(l_1,l_2) \mid \text{ there exist } 1 \leq a < b < c \leq n, l_1 = a + b, l_2 = b + c\}$ is $\frac{3}{2}n^2-\frac{15}{2}n+10$.
\end{remark}

\begin{definition} \label{order}
    Let $(l_1,l_2)$ and $(l_1',l_2') \in \mathbb{N}^2$. We say $(l_1,l_2) < (l_1',l_2')$ if 
    \begin{enumerate}[(a)]
        \item $l_1 < l_1'$ or 
        \item $l_1 = l_1'$ and $l_2 < l_2'$.
    \end{enumerate}
\end{definition}
    
\begin{notation}
    Let $H$ be a $3$-uniform hypergraph as in Set-up \ref{setup}. Then the set $E_{l_1,l_2}^c$ denotes the set of all edges in an induced hypergraph $(V_{E_{l_1,l_2}},E\setminus \{\cup_{(l_1',l_2')<(l_1,l_2)}E_{l_1',l_2'} \cup E_{l_1,l_2}\})$.
\end{notation}

\begin{proof} [Proof of Theorem~\ref{conjpmd}]
    We show the pm-decomposition of $H$ by arranging the matching $E_{l_1,l_2}$ in the order defined in Definition \ref{order}. That is, we show a matching $E_{l_1,l_2}$ is a positive matching on $(V,E\setminus \cup_{(l_1',l_2')<(l_1,l_2)} E_{l_1',l_2'})$. 
    
    If $\lvert E_{l_1,l_2}\rvert = 1$ then it follows from Remark \ref{Rem.pos}(b)  that $E_{l_1,l_2}$ is positive matching on $(V,E\setminus \cup_{(l_1',l_2')<(l_1,l_2)}E_{l_1',l_2'})$. Let $a \in \mathbb{N}$ be an integer and we represent matching $E_{l_1,l_2}$ with $\lvert E_{l_1,l_2}\rvert >1$ in the following form:
    \begin{equation} \label{matrix}
        \begin{bmatrix}
            l_1-l_2+m-a & l_2-m+a & m-a\\
            l_1-l_2+m-a+1 & l_2-m+a-1 & m-a+1\\
            \vdots & \vdots & \vdots \\
            l_1-l_2+m-1 & l_2-m+1 & m-1\\
            l_1-l_2+m & l_2-m & m\\
        \end{bmatrix}
        =
        \begin{bmatrix}
            x_{11} & x_{12} & x_{13}\\
            x_{21} & x_{22} & x_{23}\\
            \vdots & \vdots & \vdots \\
            x_{a1} & x_{a2} & x_{a3}\\
            x_{a+1,1} & x_{a+1,2} & x_{a+1,3}\\
        \end{bmatrix}
    \end{equation}
    where $6\leq m \leq n$, $l_2-m+a < m-a$, $l_1-l_2+m-a \geq 1$. In Equation (\ref{matrix}) each row is an edge of $E_{l_1,l_2}$ (see Remark \ref{form}). We define a map $\rho: V(E_{l_1,l_2}) \rightarrow \mathbb{Q}$ as
    \begin{equation} \label{mapping}
        \begin{split}
            \rho(x_{11})&=t, \ \rho(x_{12})=-(\frac{t}{2} -1), \ \rho(x_{13})=-(\frac{t}{2} -1), \\
            \rho(x_{21})&=-(1+\rho(x_{12})+\rho(x_{13})), \\   \rho(x_{23})&=-(1+\rho(x_{11})+\rho(x_{12})), \\
            \rho(x_{22})&=1-(\rho(x_{21})+\rho(x_{23})), \\
            \rho(x_{i1})&=-(1+\rho(x_{i-1,2})+\rho(x_{i-2,2})), \\ 
            \rho(x_{i3})&=-(1+\rho(x_{i-1,1})+\rho(x_{i-1,2})), \\
            \rho(x_{i2})&=1-(\rho(x_{i1})+\rho(x_{i3})),
        \end{split}
    \end{equation}
    where $3\leq i \leq a+1$, $t \in \mathbb{N}$ such that  $\rho(x_{a+1,1}) > 0$ and $\rho(x_{a+1,2}) \leq 0$.

    To show the existence of such a $t$, first we claim that one can recursively obtain the following forms: $\rho(x_{i1})=t-\alpha_i$, $\rho(x_{i2})=-t/2+\beta_i$ and $\rho(x_{i3})=-t/2-2(i-1)$, where $\alpha_i$ and $\beta_i$ are some positive integers, for all $i>1$. The proof of the claim is by strong induction on $i$. For $i=2$, from Equation (\ref{mapping}) it follows that $\rho(x_{21})=t-3$, $\rho(x_{22})=-t/2+6$ and $\rho(x_{3})=-t/2-2$. Similarly, for $i=3$, it follows from Equation (\ref{mapping}) that $\rho(x_{31})=t-8$, $\rho(x_{32})=-t/2+13$ and $\rho(x_{33})=-t/2-4$. Now, assume the assertion holds for all $i<j$, for some $j\geq 4$. For statement $j$,  it follows from the induction hypothesis and Equation (\ref{mapping})  that $\rho(x_{j1})=t-(\beta_{j-1}+\beta_{j-2}+1)$. From Equation (\ref{eq.a3r}) and induction hypothesis it follows that $\rho(x_{j3})=-t/2-2(j-1)$. From Equation (\ref{mapping}) it follows that  $\rho(x_{j2})=-t/2+(\beta_{i-1}+\beta_{i-2}+2j)$. Hence, the claim is proved for $i=j$. 
    
    Thus, the conditions $\rho(x_{a+1,1}) > 0$ and $\rho(x_{a+1,2}) \leq 0$ hold if $t\geq 26$ when $a=1,2$, and  $t \geq 2(\beta_{i-1}+\beta_{i-2}+2a)$ when $a>2$.
    
    Let $E_{l_1,l_2}^c$ be the of set all edges in $(V_{E_{l_1,l_2}},E\setminus \{\cup_{(l_1',l_2')<(l_1,l_2)}E_{l_1',l_2'} \cup E_{l_1,l_2}\})$.
    Observe that $x_{11}<x_{21}< \cdots < x_{a+1,1}<x_{a+1,2}<x_{a2}< \cdots <x_{12}<x_{13} < \cdots <x_{a+1,3}$. From the observation, it follows that 
    \begin{equation*}
        E_{l_1,l_2}^c=
        \begin{cases}
            \{x_{11},x_{12}, \gamma\}, & \gamma=x_{23}, \ldots, x_{a+1,3}, \\
            \{x_{11}, \beta, \gamma\}, & \beta<\gamma, \text{ and } \beta,\gamma \in \{x_{13}, \ldots, x_{a+1,3}\}, \\
            \{x_{i1},x_{i2}, \gamma\}, & 2 \leq i \leq a, \text{ and } \gamma=x_{i+1,3}, \ldots, x_{a+1,3}, \\
            \{x_{i1}, \beta,\gamma\}, & 2 \leq i \leq a+1, \beta<\gamma, \text{ and } \beta,\gamma \in \{x_{i-1,2},\ldots,x_{12},x_{13}, \ldots, x_{a+1,3}\}, \\
            \{\alpha, \beta,\gamma\}, &  \alpha<\beta<\gamma, \text{ and } \alpha,\beta,\gamma \in \{x_{a+1,2},\ldots,x_{12},x_{13}, \ldots, x_{a+1,3}\}.
        \end{cases}
    \end{equation*}
    Clearly, these are all the edges in $E\setminus \{\cup_{(l_1',l_2')<(l_1,l_2)}E_{l_1',l_2'} \cup E_{l_1,l_2}\}$ induced by $V_{E_{l_1,l_2}}$.

    \vspace{2mm}

    From Remark \ref{Rem.pos}(a), it follows that to complete the proof, it is enough to prove the following two claims: 
    \begin{enumerate}
        \item \label{claim1} If $e\in E_{l_1,l_2}$ then $\sum_{i\in e}\rho(i)>0$;
        \item \label{claim2}  If $e'\in E_{l_1,l_2}^c$ then $\sum_{i\in e'}\rho(i)<0$.
    \end{enumerate}
    Claim (\ref{claim1}): From the map $\rho$ $(\ref{mapping})$ it is clear that $\rho(x_{11})+\rho(x_{12})+\rho(x_{13})=2$ and $\rho(x_{i1})+\rho(x_{i2})+\rho(x_{i3})=1$, for all $i=2,\ldots,a+1$. 

    \vspace{1mm}
    
    Before proving Claim (\ref{claim2}) we show that \begin{multline} \label{ineq}
         \rho(x_{11})> \rho(x_{21})>  \ldots >  \rho(x_{a+1,1})> \\
         \rho(x_{a+1,2})> \rho(x_{a2})>  \ldots > \rho(x_{12})= \rho(x_{13})>  \ldots > \rho(x_{a+1,3}).
    \end{multline}
It is clear that $\rho(x_{12})$ and $\rho(x_{13})$ are equal from the map $\rho$. For all $i=a+1,a,\ldots,3$, one has 
    \begin{equation} \label{eq.a3r}
        \begin{split}
            \rho(x_{i3})&=-1-\rho(x_{i-1,1})+\rho(x_{i-1,2})) \\
            &= -1-\rho(x_{i-1,1})-1+\rho(x_{i-1,1})+\rho(x_{i-1,3}) \\
            &<\rho(x_{i-1,3}).\\
        \end{split}
    \end{equation}
    \text{We have, }
    \begin{equation*} 
        \begin{split}
            \rho(x_{23})&=-1-\rho(x_{11})+\rho(x_{12})=-1-t+\rho(x_{12}) \\
            &<\rho(x_{12})=\rho(x_{13}).
        \end{split}
    \end{equation*}
    From Equation (\ref{mapping}) it follows that 
    \begin{equation*} \label{eq.2r}
    \begin{split}
       \rho(x_{11}) & > -(\rho(x_{12})+\rho(x_{13})) > \rho(x_{21}).\\
        \rho(x_{21}) & > -(\rho(x_{22})+\rho(x_{23})) \\
        & = -\rho(x_{22})-\rho(x_{12})+1+\rho(x_{11})+2\rho(x_{12})) \\
        &= -\rho(x_{22})-\rho(x_{12})+3 >  \rho(x_{31}).\\
    \end{split}
    \end{equation*}
For all $i=4,5, \ldots, a+1$,  one has
    \begin{equation} \label{eq.r1}
    \begin{split}
        \rho(x_{i1})&=-1-\rho(x_{i-1,2})-\rho(x_{i-2,2}) \\
        &= -3+\rho(x_{i-1,1})+\rho(x_{i-1,3})+\rho(x_{i-2,1})+\rho(x_{i-2,3}) \\
        &=\rho(x_{i-1,1}) -4 +\rho(x_{i-2,3})-\rho(x_{i-2,2}). \\
    \end{split}
    \end{equation}
From Equation  (\ref{mapping}) it follows that  $-4 +\rho(x_{2,3})-\rho(x_{2,2}) < 0$. 
Therefore from Equation \ref{eq.r1} we have, $\rho(x_{41})<\rho(x_{31})$. 

Consider the Fibonacci sequence $f_1=1, f_2=2$ and $f_n=f_{n-2}+f_{n-1}$ for all $n>2$. From Equation  (\ref{mapping}) it follows that  $\rho(x_{i-2,3})-\rho(x_{i-2,2}) < -f_{i-2}\rho(x_{11})-f_{i-1}\rho(x_{12})-f_{i-1}\rho(x_{13})<0$, for all $i=5,6,\ldots,a+1$. Thus from Equation (\ref{eq.r1}) we have, $\rho(x_{i1})<\rho(x_{i-1,1})$ for all $i=5,6,\ldots,a+1$.

Since $\rho(x_{a+1,1}) > 0$ and $\rho(x_{a+1,2}) \leq 0$, one has $\rho(x_{a+1,1})>\rho(x_{a+1,2})$. For all $i= a+1,\ldots,3$, from Equation (\ref{mapping}) we have,
    \begin{equation*}
        \begin{split}
            \rho(x_{i2})&=1-(\rho(x_{i1})+\rho(x_{i3})) \\
            &= 1+1+\rho(x_{i-1,2})+\rho(x_{i-2,2})+1+\rho(x_{i-1,1})+\rho(x_{i-1,2}) \\
            &=3+\rho(x_{i-1,2})+O(i),  \text{ where } O(i)=\rho(x_{i-1,2})+\rho(x_{i-2,2})+\rho(x_{i-1,1}).\\
         \end{split}
    \end{equation*}       
Since $\rho(x_{i-1,2})+\rho(x_{i-2,2}) = - \rho(x_{i1}) -1$  and $\rho(x_{i1}) < \rho(x_{i-1,1})$,  it follows that $O(i)=\rho(x_{i-1,2})+\rho(x_{i-2,2})+\rho(x_{i-1,1}) \geq 0$. Therefore, we get $\rho(x_{i2})>\rho(x_{i-1,2})$. Also, we have
    \begin{equation*}
        \begin{split}
            \rho(x_{22})&=1-(\rho(x_{21})+\rho(x_{23}))\\
            &= 1+1+\rho(x_{12})+\rho(x_{13})+1+\rho(x_{11})+\rho(x_{12})=5+\rho(x_{12}) \\
            &>\rho(x_{12}).\\
        \end{split}
    \end{equation*}
    Hence, Equation (\ref{ineq}) holds.

\vspace{1mm}
      
    \noindent Claim (\ref{claim2}): First we show that for edges $\{x_{11},x_{12}, \gamma\} \in E_{l_1,l_2}^c$, $\rho(x_{11})+\rho(x_{12})+ \rho(\gamma)<0$, where $\gamma\in \{x_{23}, \ldots, x_{a+1,3}\}$. From the map $\rho$ it follows that $\rho(x_{11})+\rho(x_{12})+ \rho(x_{23})=-1$. From Equation (\ref{ineq}) it follows that $\rho(x_{23})>\rho(j)$, for all $j\in \{x_{33}, \ldots, x_{a+1,3}\}$. Thus $\rho(x_{11})+\rho(x_{12})+\rho(j)<0$, as desired. Similarly, for every edge $\{x_{i1},x_{i2}, \gamma\} \in E_{l_1,l_2}^c$, one has $\rho(x_{i1})+\rho(x_{i2})+ \rho(\gamma)<0$, where $2 \leq i \leq a, \text{ and } \gamma\in \{x_{i+1,3}, \ldots, x_{a+1,3}\}$. From Equations (\ref{ineq}) and (\ref{mapping}) one has $\rho(x_{a+1,2}) \leq 0$, and $\rho(x_{a+1,2}) > \rho(j)$, for all $j\in \{x_{a,2},\ldots,x_{12},x_{13}, \ldots, x_{a+1,3}\}$. Therefore it is clear that for every edge  $\{\alpha, \beta,\gamma\} \in E_{l_1,l_2}^c$, one has  $\rho(\alpha)+ \rho(\beta)+\rho(\gamma)<0$, where $\alpha<\beta<\gamma, \text{ and } \alpha,\beta,\gamma \in \{x_{a+1,2},\ldots,x_{12},x_{13}, \ldots, x_{a+1,3}\}$.

    Next, we show that for edges  $\{x_{11}, \beta, \gamma\} \in E_{l_1,l_2}^c$, $\rho(x_{11})+ \rho(\beta)+ \rho(\gamma) <0$, for all  $\beta<\gamma, \text{ and } \beta,\gamma \in \{x_{13}, \ldots, x_{a+1,3}\}$. From Equation (\ref{mapping}) it follows that $\rho(x_{11})+\rho(x_{13})+\rho(x_{23})=-1$. Then from Equation (\ref{ineq}) one has $\rho(x_{13})+\rho(x_{23})\geq \rho(i)+\rho(j)$, for all $i<j$ and $i,j\in \{x_{13}, \ldots, x_{a+1,3}\}$. Hence we have $\rho(x_{11})+ \rho(\beta)+ \rho(\gamma) <0$ as desired. Similarly, for every edge  $\{x_{i1}, \beta,\gamma\} \in  E_{l_1,l_2}^c$, one has $\rho(x_{i1})+ \rho(\beta)+ \rho(\gamma) <0$, for all  $2 \leq i \leq a+1$, $\beta<\gamma, \text{ and } \beta,\gamma \in \{x_{i-1,2},\ldots,x_{12},x_{13}, \ldots, x_{a+1,3}\}$.  Thus $E_{l_1,l_2}$ is a positive matching on $(V,E\setminus \cup_{(l_1',l_2')<(l_1,l_2)} E_{l_1',l_2'})$ as desired.
\end{proof}

\begin{proof} [Proof of Theorem~\ref{conj2}]
    It follows from Theorem \ref{conjpmd} and Remark \ref{Rem.conj}.
\end{proof}

\begin{proof} [Proof of Theorem~\ref{main1}]
    It follows from Theorem \ref{conj2}.
\end{proof}

LSS-ideals can be geometrically viewed as follows (cf. \cite[Proposition 9.9]{CW2019}). ``\textit{Let $\mathbb{K}$ be an algebraically closed field and $H$ be an $r$-uniform hypergraph. Consider the map $\phi:(\mathbb{K}^n)^d \rightarrow \underbrace{\mathbb{K}^n \otimes \cdots \otimes \mathbb{K}^n}_r$ by 
$$(v_1,\ldots,v_d) \longmapsto \sum_{j=1}^{d}\underbrace{v_j\otimes\cdots \otimes v_j}_r=\sum_{j=1}^{d}\sum_{1\leq i_1,\ldots,i_r\leq n}(v_j)_{i_1}\cdots(v_j)_{i_r}e_{i_1}\otimes\cdots \otimes e_{i_r}$$
where $\{e_1,\ldots,e_n\}$ is the standard basis of $\mathbb{K}^n$. The Zariski closure of the image of $\phi$ is the variety $S_{n,r}^d$ of symmetric tensors of (symmetric) rank $\leq d$.  Let $\mathcal{V}(L_H^{\mathbb{K}}(d))$ be the vanishing locus of the ideal $L_H^{\mathbb{K}}(d)$. Then the restriction of the map $\phi$ to $\mathcal{V}(L_H^{\mathbb{K}}(d))$ is a parametrization of coordinate section of $S_{n,r}^d$ with $0$ coefficient at $e_{i_1}\otimes\cdots \otimes e_{i_r}$ for $\{i_1,\ldots, i_k\} \in E$. In particular, the Zariski-closure of the image of the restriction is irreducible if $L_H^{\mathbb{K}}(d)$ is prime. Hence, establishing the primality of $L_H^{\mathbb{K}}(d)$ serves as a valuable method for deducing the irreducibility of coordinate sections of $S_{n,r}^d$." }

\begin{remark}
    Let $H$ be an $r$-uniform hypergraph on $[n]$. If $d\geq \pmd(H)+1$, then the coordinate sections of the variety  $S_{n,r}^d$ with respect to $H$ are irreducible.
\end{remark}

As an application to Theorem \ref{main1} and using the known maximum bound for the generic rank of symmetric tensors of order $r$ on $\mathbb{K}^n$, which is given by $\frac{1}{n}\lceil \binom{n+r-1}{n-1}\rceil$ (see [\cite{BT2015}, Theorem 1.2] and [\cite{BO2008}, Section 3.1]),
 we obtain the following result.

\begin{cor}
    Let $H$ be a $3$-uniform hypergraph on $[n]$. Then for $\frac{3}{2}n^2-\frac{15}{2}n+11 \leq d \leq \frac{1}{n}\lceil \binom{n+2}{3}\rceil -1$, every coordinate sections of the variety $S_{n,3}^d$ with respect to $H$ are irreducible.
\end{cor}


\subsection{Lower bound for pmd} 
In this subsection, we establish a lower bound for $\pmd$ of a complete $3$-uniform hypergraph using a linear polynomial in terms of the number of vertices. Additionally, we explore the equality of the $\pmd$ values for few complete $3$-uniform hypergraphs.

\begin{theorem} \label{lowerboundpmd}
    Let $H_n$ be a complete $3$-uniform hypergraph on $[n]$. Then $9n -35 \leq \pmd(H_n) \leq \frac{3}{2}n^2-\frac{15}{2}n+10$, for all $n \geq 5$.
\end{theorem}
\begin{proof}
    The upper bound follows from the Theorem \ref{conj2}.
    We prove the lower bound by induction on $n$. For $n=5$, the statement follows from the minimal matching decomposition, which is equal to the number of edges in $H_5$, that is $10$, since each matching has exactly one element.
    
    Suppose $\pmd(H_{n-1}) \geq 9(n-1) -35$. Consider $M_1, \ldots, M_p$ to be a positive matching decomposition of $H_n$ with $1 = \lvert M_1 \rvert \leq \cdots \leq \lvert M_p \rvert$. Let $r$ be the first integer such that $\lvert M_r \rvert > 1$. Therefore, $M_r$ contains at least two edges. First, we fix any two edges in $M_r$, say $e'_1$ and $e'_2$. Then, for a fixed vertex $u$ in $e'_1$, there are $9$ edges that have $u$ as a vertex in the induced subgraph $H[V_{\{e'_1,e'_2\}}]$. Let $e_1, \ldots, e_9$ be edges with a common vertex $u$ in the induced subgraph on $V(e'_1,e'_2)$. From Theorem \ref{pos.M_2} it follows that if $e_1, \ldots, e_9 \notin \cup_{j=1}^{r-1}M_j$ then $M_r$ is  not positive. Then it follows that 
 $$\pmd(H_n) = 9+ \pmd(H_n-\{e_1, \ldots, e_9\}) \geq 9+ \pmd(H_n\setminus u) = 9+ 9(n-1) -35,$$
as desired.
\end{proof}

In the following example, we produce a positive matching decomposition of a complete $3$-uniform hypergraph $H$, in which it attains the lower bound, i.e. $9n-35$, and strictly less than the upper bound, i.e. $\frac{3}{2}n^2-\frac{15}{2}n+10$, when $n \geq 5$.

\begin{example} \label{pmdstrict}
    Let $H$ be a complete $3$-uniform hypergraph on $[7]$.  Let $M_{22}=\{\{1,4,5\},\{2,3,6\}\}$, $M_{23}=\{\{1,4,6\},\{2,3,7\}\}$, $M_{24}=\{\{1,3,7\},\{2,4,5\}\}$, $M_{25}=\{\{1,5,6\},\{2,4,7\}\}$, $M_{26}=\{\{2,5,6\},\{3,4,7\}\}$, $M_{27}=\{\{1,2,3\},\{5,6,7\}\}$, and $M_{28}=\{\{1,6,7\},\{3,4,5\}\}$ be a matching of $E(H)$. Let $M_1, \ldots, M_{21}$ be distinct singleton edges from the edge set $E(G)\setminus \{M_{22},\dots,M_{28}\}$, taken in any order. We claim that $M_1, \ldots, M_{28}$ be the positive matching decomposition of $H$. Clearly, $M_i$ is positive matching on $(V(H),E(H)\setminus \cup_{j=1}^{i-1}M_j)$ for $i=1, \ldots, 21$. From Theorem \ref{pos.M_2} it follows that $M_{k}$ is positive matching on $(V(H),E(H)\setminus \cup_{j=1}^{k-1}M_j)$, where $k=22,\ldots,28$. 
\end{example}

In the following Table \ref{tab:pmd}, we compare the values of $\pmd$ of $3$-uniform hypergraph $H$ with the number of vertices of $H$. 

\begin{table}[ht]
    \centering
    \begin{tabular}{|c|c|c|}
    \hline
        Number of vertices of $H$ & $\pmd(H)$ & $\frac{3}{2}n^2-\frac{15}{2}n+10$ \\ \hline
        $3$ & $1$ & $1$ \\  \hline
        $4$ & $4$ & $4$ \\ \hline
        $5$ & $10$ & $10$ \\ \hline
        $6$ & $19$ & $19$ \\ \hline
        $7$ & $28$ & $31$\\ \hline
        $8$ & $37 \leq \pmd(H) \leq 38$ & $46$ \\ \hline
    \end{tabular}
    \vspace{5mm}
    \caption{Number of vertices of $H$ \textit{vs} $\pmd(H)$ \textit{vs} upper bound}
    \label{tab:pmd}
\end{table}

The equality of $\pmd(H)$ for a $3$-uniform hypergraph follows from the observation that each matching has exactly one element when the number of vertices is less than or equal to $5$. Additionally, in the case where the number of vertices is $6$, there is only one matching with a cardinality of two. This characteristic qualifies it as a minimal matching decomposition.

For a $3$-uniform hypergraph $H$ with $7$ vertices, the value of $\pmd$ is determined by applying Theorem \ref{lowerboundpmd} and Example \ref{pmdstrict}.

\subsection{\textit{r}-Uniform complete hypergraphs} \label{subsecpumd}
In this subsection, we derive an upper bound for $\pmd$ of complete $r$-uniform hypergraphs. This extension naturally encompasses the scenario when $r=3$. Subsequently, we endeavor to apply a parallel approach to demonstrate that $\pmd(H)$ is bounded above by a polynomial of degree $r-1$ in terms of the number of vertices when $H$ is an $r$-uniform hypergraph.

\begin{prop} \label{prop.r-uni}
    Let $H = (V, E)$ be the complete $r$-uniform hypergraph on $n$ vertices and with $\binom{n}{r}$ edges. Then for every $3 \leq l_1 \leq 2n-2r+3$, $5 \leq l_2 \leq 2n-2r+5, \ldots$, $2r-1 \leq l_{r-1} \leq 2n-1$, the set $E_{l_1,\ldots,l_{r-1}} = \{\{a_1,\ldots, a_{r}\} \in E \mid 1 \leq a_1 < \cdots < a_{r} \leq n, a_1+a_2 = l_1, a_2+a_3 = l_2,\ldots,a_{r-1}+a_{r}=l_{r-1}\}$ is a matching.
\end{prop}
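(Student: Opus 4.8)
The plan is to prove the matching property directly from the defining linear relations, the crucial structural observation being that an edge of $E_{l_1,\ldots,l_{r-1}}$ is completely determined by any single one of its entries. Writing an edge as $\{a_1 < a_2 < \cdots < a_r\}$, the relations $a_j + a_{j+1} = l_j$ let me solve recursively $a_{j+1} = l_j - a_j$, so that $a_j = c_j + (-1)^{j-1} a_1$, where $c_1 = 0$ and $c_j = l_{j-1} - l_{j-2} + \cdots \pm l_1$ depends only on $l_1,\ldots,l_{r-1}$; one checks the telescoping identity $c_j + c_{j+1} = l_j$. First I would record this parametrization together with its immediate consequence: if two edges $\{a_1 < \cdots < a_r\}$ and $\{a_1' < \cdots < a_r'\}$ of $E_{l_1,\ldots,l_{r-1}}$ agree in a single coordinate position, i.e. $a_i = a_i'$ for some $i$, then $(-1)^{i-1}(a_1 - a_1') = 0$, so $a_1 = a_1'$ and hence (by the recursion) the two edges coincide.

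Next I would localize each coordinate to its own range. From $a_j + a_{j+1} = l_j$ and $a_j < a_{j+1}$ it follows that $a_j < l_j/2 < a_{j+1}$ for every $1 \le j \le r-1$. Reading this for the two indices adjacent to a fixed position sandwiches every interior entry as $l_{j-1}/2 < a_j < l_j/2$, while $a_1 < l_1/2$ and $a_r > l_{r-1}/2$ at the extremes. Moreover, on a nonempty $E_{l_1,\ldots,l_{r-1}}$ the parameters are strictly increasing: from any edge, $l_{j+1} - l_j = a_{j+2} - a_j > 0$, so $l_1 < l_2 < \cdots < l_{r-1}$; if the set is empty there is nothing to prove. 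Consequently the $r$ position-ranges are separated by the thresholds $l_1/2 < l_2/2 < \cdots < l_{r-1}/2$, and for $i < j$ a value occurring in position $i$ satisfies (value) $< l_i/2 \le l_{j-1}/2 <$ (value in position $j$), since $l_i \le l_{j-1}$. Thus a given integer can occupy at most one coordinate position across all edges of $E_{l_1,\ldots,l_{r-1}}$.

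Finally I would assemble the matching argument. Take two distinct edges $e = \{a_1 < \cdots < a_r\}$ and $e' = \{a_1' < \cdots < a_r'\}$ and suppose they share a vertex $v$, say $v = a_i = a_j'$. If $i \ne j$, the interval separation of the second step forces $a_i$ and $a_j'$ into disjoint ranges, a contradiction; hence $i = j$. But then $e$ and $e'$ agree in position $i$, so the first step yields $e = e'$, contradicting distinctness. Therefore $e \cap e' = \emptyset$, and $E_{l_1,\ldots,l_{r-1}}$ is a matching. This argument specializes to the $r=3$ case of Remark \ref{form} and Remark \ref{Rem.conj}.

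I expect the main (though still modest) obstacle to be the bookkeeping in the second paragraph: pinning down the one-sided ranges at the extreme positions $1$ and $r$ and verifying that, together with the strict monotonicity $l_1 < \cdots < l_{r-1}$ forced by nonemptiness, the $r$ position-ranges are genuinely pairwise disjoint. The recursive parametrization $a_j = c_j + (-1)^{j-1} a_1$ and the cancellation giving $a_j < l_j/2 < a_{j+1}$ are the engine that makes everything else routine.
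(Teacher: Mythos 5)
Your proof is correct, and it reaches the conclusion by a somewhat different route than the paper. The paper's proof of Proposition \ref{prop.r-uni} is a direct three-way case analysis on how a shared vertex can sit in two edges $e=\{x_1<\cdots<x_r\}$ and $e'=\{y_1<\cdots<y_r\}$: same position $x_i=y_i$ (propagating through the relations $x_j+x_{j+1}=l_j$ forces $e=e'$), adjacent positions $x_i=y_{i+1}$ or $x_{i+1}=y_i$ (immediate contradiction with the orderings via $x_i+x_{i+1}=l_i=y_i+y_{i+1}$), and positions at distance at least $2$ (again an order contradiction). Your step 1 is essentially the paper's Case I made explicit through the parametrization $a_j=c_j+(-1)^{j-1}a_1$, but your steps 2 and 3 replace the paper's Cases II and III by a single structural lemma: the thresholds $l_1/2<l_2/2<\cdots<l_{r-1}/2$, where the strict monotonicity of the $l_j$ is forced by nonemptiness via $l_{j+1}-l_j=a_{j+2}-a_j>0$, separate the $r$ coordinate positions into pairwise disjoint ranges, so a given vertex can occupy only one position across all edges of $E_{l_1,\ldots,l_{r-1}}$. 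This uniform interval-separation argument handles all cross-position coincidences at once (the paper needs two separate cases for adjacent versus distant positions, and uses the symmetry of $e$ and $e'$ implicitly) and yields extra structure for free: each edge is determined by its least element, which recovers Remark \ref{form} for $r=3$ and could be used to bound $\lvert E_{l_1,\ldots,l_{r-1}}\rvert$. The trade-off is exactly the bookkeeping you anticipated at the extreme positions $1$ and $r$ (where only one-sided bounds hold, though your assembly only ever invokes the upper bound at the smaller position and the lower bound at the larger one, so this is sound), whereas the paper's case analysis is shorter and never needs $l_1<\cdots<l_{r-1}$ at all.
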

\begin{proof}
    Let $e = \{x_1 < x_2 < \cdots < x_r\}$, $e' = \{y_1 < \cdots < y_r\} \in E_{l_1,\ldots,l_{r-1}}$ for some $3 \leq l_1 \leq 2n-2r+3$, $5 \leq l_2 \leq 2n-2r+5, \ldots$, $2r-1 \leq l_{r-1} \leq 2n-1$. Assume $e \neq e'$ and $e \cap e' \neq \emptyset$.
    
    \textbf{Case I.} If $x_1=y_1$ then since $x_1+x_2=l_1=y_1+y_2$ we have $x_2=y_2$. By repeating a similar argument, we get $x_i=y_i$ for all $i=3,\ldots,r$. Thus we have $e=e'$ a contradiction. The remaining cases $x_i=y_i$ can be deduced in a similar manner. 
    
    \textbf{Case II.} If $x_i=y_{i+1}$ or $x_{i+1}=y_i$, where $i=1,\ldots,r-1$, then from condition $x_i+x_{i+1}=l_i=y_i+y_{i+1}$ yield  a contradiction to the order of the elements in $e$ and $e'$. 
    
    \textbf{Case III.} If $x_i=y_j$ such that $i<j$ and $j-i\geq 2$, where $i=1,\ldots,r-2$, then $y_1<\cdots<y_j=x_i<x_{i+1}$ contradicts $x_i+x_{i
    +1}=y_i+y_{i+1}=l_i$.
    
    Hence, we have $e\cap e'=\emptyset$ as desired.
\end{proof}

\begin{prop} \label{prop.runibound}
    Let $H=(V,E)$ be the complete $r$-uniform hypergraph on $n$ vertices. Then the cardinality of the set $E_n=\{(l_1,\ldots,l_{r-1})\mid \text{ there exists  } 1 \leq a_1 < \cdots < a_{r} \leq n, a_1+a_2 = l_1, a_2+a_3 = l_2,\ldots,a_{r-1}+a_{r}=l_{r-1}\}$ is bounded from above by a polynomial of degree $r-1$ in terms of the number of vertices, that is $\lvert E_n \rvert \leq n^{r-1}+O(r-2)$, where $O(r-2)$ denotes a polynomial of degree less than or equal to $r-2$ in terms of $n$.
\end{prop}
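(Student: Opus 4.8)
The plan is to count $|E_n|$ exactly by exploiting the one-parameter symmetry of the consecutive-sum map. Let $\Phi\colon\mathbb{Z}^r\to\mathbb{Z}^{r-1}$ be the linear map $\Phi(a_1,\dots,a_r)=(a_1+a_2,a_2+a_3,\dots,a_{r-1}+a_r)$, and let $\mathcal{A}=\{a\in\mathbb{Z}^r : 1\le a_1<a_2<\cdots<a_r\le n\}$ be the set of edges written as increasing sequences. By definition $E_n=\Phi(\mathcal{A})$, so $|E_n|$ is exactly the number of fibers of $\Phi$ that meet $\mathcal{A}$. The kernel of $\Phi$ is $\mathbb{Z}\cdot v$ with $v=(1,-1,1,\dots,(-1)^{r-1})$, which is primitive, so the fibers inside $\mathbb{Z}^r$ are precisely the cosets $a+\mathbb{Z}v$. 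First I would observe that for a fixed coset the conditions $1\le a_i+t(-1)^{i-1}\le n$ and $a_i+t(-1)^{i-1}<a_{i+1}+t(-1)^{i}$ are all affine in the translation parameter $t$; hence the set of $t$ with $a+tv\in\mathcal{A}$ is an interval of integers, and (since the first coordinate $a_1+t$ is monotone in $t$) every fiber meeting $\mathcal{A}$ has a unique representative of smallest first coordinate. Consequently $|E_n|$ equals the number of these minimal representatives.

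Next I would count the minimal representatives by subtracting the non-minimal ones. A sequence $a\in\mathcal{A}$ fails to be minimal exactly when $a-v\in\mathcal{A}$, so
\[
|E_n|=|\mathcal{A}|-\bigl|\{a\in\mathcal{A}:a-v\in\mathcal{A}\}\bigr|=\binom{n}{r}-|\mathcal{A}^{+}|,
\]
where $\mathcal{A}^{+}=\{b\in\mathcal{A}:b+v\in\mathcal{A}\}$ (the identification $b=a-v$ gives the last equality, and $|\mathcal{A}|=\binom{n}{r}$). Writing out $b+v\in\mathcal{A}$, the strict-increase conditions force $b_{i+1}-b_i\ge 3$ at every odd index $i$ and impose no new condition at even indices, while the boundary condition contributes $b_r\le n-1$ precisely when $r$ is odd. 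I would then apply the standard gap-substitution $c_i=b_i-\sum_{j<i}(g_j-1)$, with $g_i=3$ at odd indices and $g_i=1$ otherwise, which turns $\mathcal{A}^{+}$ into unrestricted increasing sequences in a shortened range; a short check on the parity of $r$ shows that both cases collapse to the same value $|\mathcal{A}^{+}|=\binom{n-r}{r}$. This yields the closed formula $|E_n|=\binom{n}{r}-\binom{n-r}{r}$.

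Finally I would read off the asymptotics. Both $\binom{n}{r}$ and $\binom{n-r}{r}$ have leading term $n^r/r!$, so these cancel and the difference is a polynomial in $n$ of degree exactly $r-1$; extracting the $n^{r-1}$-coefficient from the two falling factorials gives $\tfrac{r^2}{r!}=\tfrac{r}{(r-1)!}$. Hence $|E_n|=\tfrac{r}{(r-1)!}\,n^{r-1}+O(n^{r-2})$, and since $\tfrac{r}{(r-1)!}\le 1$ for $r\ge 4$, this is exactly the asserted bound $|E_n|\le n^{r-1}+O(n^{r-2})$, the error term $O(n^{r-2})$ being a polynomial of degree at most $r-2$.

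I expect the main obstacle to be the exact evaluation $|\mathcal{A}^{+}|=\binom{n-r}{r}$: one must correctly pin down which consecutive pairs acquire the gap-$\ge 3$ constraint and track how the parity of $r$ shifts the top endpoint, then verify that the two parity cases produce the identical binomial. The only other point needing care is the reduction to minimal representatives, namely that each fiber meets $\mathcal{A}$ in a single interval of translates so that the smallest representative is well defined and unique; this is exactly where the linearity of all the defining inequalities in $t$ is used.
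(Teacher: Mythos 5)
Your argument is correct, and it takes a genuinely different route from the paper's. The paper's proof is a one-line injection: since $l_i=a_i+a_{i+1}<a_{i+1}+a_{i+2}=l_{i+1}$, the tuples $(l_1,\dots,l_{r-1})$ are strictly increasing sequences in $\{3,\dots,2n-1\}$, whence $\lvert E_n\rvert<\binom{2n-3}{r-1}$, and the stated bound is then read off. You instead evaluate $\lvert E_n\rvert$ exactly. Your reduction to minimal coset representatives is sound (each fiber of $\Phi$ is a coset of $\mathbb{Z}v$ and meets $\mathcal{A}$ in an integer interval of translates, because every defining inequality is affine in $t$), and your gap analysis checks out: $b+v\in\mathcal{A}$ forces $b_{i+1}-b_i\ge 3$ exactly at odd $i$, imposes nothing new at even $i$, and adds $b_r\le n-1$ exactly when $r$ is odd, and the substitution $c_i=b_i-2\,\#\{\text{odd }j<i\}$ collapses both parities to $\lvert\mathcal{A}^{+}\rvert=\binom{n-r}{r}$. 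This gives $\lvert E_n\rvert=\binom{n}{r}-\binom{n-r}{r}=\frac{r}{(r-1)!}\,n^{r-1}+O(n^{r-2})$, which buys two things the paper's proof does not: it recovers the exact $r=3$ count $\frac{3}{2}n^2-\frac{15}{2}n+10$ of Remark~\ref{Rem.conj} as a special case (a strong consistency check), and it actually establishes the asserted bound at $r=4$, where the paper's estimate is too crude --- $\binom{2n-3}{r-1}$ has leading coefficient $2^{r-1}/(r-1)!$, which equals $4/3>1$ at $r=4$, so the paper's concluding ``hence it can be deduced'' is only literally valid for $r\ge 5$. One point you should make explicit rather than leave implicit: your own formula shows the proposition's bound is asymptotically false at $r=3$ (leading coefficient $3/2>1$), so both your proof and the statement must be read with the restriction $r\ge 4$, consistent with the paper's framing of this subsection as the case ``for all $r\geq 4$''.
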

\begin{proof}
    It is obvious that $3 \leq l_1<\cdots<l_{r-1} \leq 2n-1$. Thus the number of ways of choosing $r-1$ elements in increasing order from $[2n-3]$, that is, $\binom{2n-3}{r-1}$ is strictly greater than $\lvert E_n \rvert$. Hence, it can be deduced that  $\lvert E_n \rvert \leq n^{r-1}+O(r-2)$.
\end{proof}

\begin{theorem} \label{thm.runipmd}
    Let $H =(V,E)$ be an $r$-uniform hypergraph as in Proposition \ref{prop.r-uni}. Then $E_{l_1,\ldots,l_{r-1}} = \{\{a_1,\ldots, a_{r}\} \in E \mid a_1 < \cdots < a_{r}, a_1+a_2 = l_1, a_2+a_3 = l_2,\ldots,a_{r-1}+a_{r}=l_{r-1}\}$ is a positive matching.
\end{theorem}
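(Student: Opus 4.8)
The plan is to mimic the structure of the proof of Conjecture~\ref{conjpmd}, generalizing the explicit weight function $\rho$ from the $3$-uniform case to arbitrary $r$. The key structural fact is that each edge of $E_{l_1,\ldots,l_{r-1}}$ is completely determined by its largest entry $a_r$: given $a_r$, the chain of relations $a_{i-1} = l_{i-1} - a_i$ recovers $a_{r-1},\ldots,a_1$ recursively. Thus I can tabulate the matching as a rectangular array whose rows are the edges, sorted by increasing top entry $a_r$, exactly as in Equation~(\ref{matrix}). The $k$-th column of this array is then a monotone sequence, and the defining sum conditions $a_{i-1}+a_i = l_{i-1}$ link adjacent columns. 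My aim is to assign a weight $t$ to a single corner entry and propagate weights across the array via the sum relations, forcing each edge-row to have positive total weight while keeping every other admissible triple (now $r$-subset) negative.

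First I would set up the array and a recursive weight assignment analogous to~(\ref{mapping}): fix $\rho$ of the first row to sum to a prescribed positive value (say using a large parameter $t$), and then define each subsequent entry so that the running edge-sum stays positive but decreases steadily down the array. Concretely, for each new row I would determine the weights of its entries from the already-assigned weights in the columns above, using the adjacency relations $a_{i-1}+a_i=l_{i-1}$ to pin down all but one degree of freedom per row, and use the remaining freedom to normalize the row-sum. Second, I would establish the analogue of the monotonicity chain~(\ref{ineq}): a total linear order on the weights of all vertices appearing in $V_{E_{l_1,\ldots,l_{r-1}}}$, column by column, with the property that the ``small'' entries (the tails of the columns) are the ones that will appear in the non-matching edges $E_{l_1,\ldots,l_{r-1}}^c$. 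The Fibonacci-type estimate used in the $3$-uniform proof would be replaced by a higher-order linear recurrence estimate to guarantee strict monotonicity within each column.

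The decisive step, as in the original proof, is the verification of the two claims: that every genuine edge $e \in E_{l_1,\ldots,l_{r-1}}$ has $\sum_{i\in e}\rho(i)>0$ (immediate from the normalization of each row), and that every edge $e'$ of the induced complement hypergraph $E_{l_1,\ldots,l_{r-1}}^c$ has $\sum_{i\in e'}\rho(i)<0$. For the latter I would first classify the complement edges by how many of their vertices are ``forced first coordinates'' (entries that are extremal in their columns) versus ``free tail coordinates,'' paralleling the case split in~(\ref{claim2}). The governing inequalities would again be bootstrapped from a handful of boundary identities of the form $\sum \rho(\text{row entries shifted by one}) = -1$, combined with the column monotonicity to replace each free coordinate by a strictly smaller weight. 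Once this holds, Remark~\ref{Rem.pos}(1) reduces positivity on the whole hypergraph to positivity on the induced subhypergraph, completing the argument.

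The main obstacle I anticipate is the combinatorial explosion of cases for $E_{l_1,\ldots,l_{r-1}}^c$: for $r=3$ there were five edge shapes, but for general $r$ the number of ways an induced $r$-subset can mix forced and free coordinates grows, and I will need a uniform inequality that handles all of them at once rather than case-by-case bookkeeping. I expect to manage this by proving a single monotonicity-plus-normalization lemma stating that replacing any maximal (top-of-column) vertex of a complement edge by the corresponding free tail vertex strictly decreases the edge-sum, so that the ``worst case'' complement edge is always the one closest in shape to a true matching edge, whose sum is pinned to $-1$ by a boundary identity; strictness of the column order then yields the strict inequality for all remaining complement edges. Verifying that the recurrence defining $\rho$ admits a choice of $t$ making all the required sign conditions ($\rho(x_{\,\cdot,1})>0$ at the bottom row, tail weights nonpositive) simultaneously satisfiable is the other delicate point, and I would address it by showing the relevant boundary quantities are linear in $t$ with the correct leading signs.
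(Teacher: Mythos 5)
Your proposal takes essentially the same route as the paper's own proof: the paper likewise generalizes the array representation of Equation~(\ref{matrix}), defines recursive weight maps ($\phi$ for odd $r$, $\psi$ for even $r$) anchored by a large parameter $t$ with each row-sum normalized to $1$ and the bottom-row sign conditions $\phi(x_{a+1,1})>0$, $\phi(x_{a+1,2})\leq 0$, establishes a global descending chain of weights along an interleaved vertex sequence generalizing~(\ref{ineq}), and then disposes of the complement edges $E_{l_1,\ldots,l_{r-1}}^c$ ``analogous to Claim~(\ref{claim2})'' before invoking Remark~\ref{Rem.pos}. The only details the paper makes explicit that you gloss over are the case split on the parity of $r$ (which determines whether the columns increase or decrease down the rows and hence the shape of the ascending sequences~(\ref{asc.sequ}) and~(\ref{2asc.sequ})), the use of Remark~\ref{Rem.pos}(2) when $\lvert E_{l_1,\ldots,l_{r-1}}\rvert=1$, and the caveat that the maps must be defined directly for matchings of cardinality at most $r-2$ and only recursively beyond that; these are refinements within the same strategy, not a different argument.
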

\begin{proof}
    First, we fix an order on matching $E_{l_1,\ldots,l_{r-1}}\subset E$. We say $E_{l_1,\ldots,l_{r-1}}<E_{l_1',\ldots,l_{r-1}'}$ if $(l_1,\ldots,l_{r-1})<_{lex}(l_1',\ldots,l_{r-1}')$. Now, we will show the matching $E_{l_1,\ldots,l_{r-1}}$ is a positive matching on $\Lambda=(V,E\setminus \cup_{(l_1',\ldots,l_{r-1}')<_{lex}(l_1,\ldots,l_{r-1})} E_{l_1',\ldots,l_{r-1}'})$ by cardinality of the matching. 

    If $\lvert E_{l_1,\ldots,l_{r-1}}\rvert = 1$ then  from Remark \ref{Rem.pos}(b) it follows that $E_{l_1,\ldots,l_{r-1}}$ is positive matching on $\Lambda$. We depict a matching $E_{l_1,\ldots,l_{r-1}}$ with ($\lvert E_{l_1,\ldots,l_{r-1}}\rvert >1$) using the following representation:
    
    \textbf{Case I.} For an odd integer $r\geq 5$  we set
    \begin{center}
    \begin{equation*}
        \begin{bmatrix}
            l_1-l_2+l_3-\ldots -l_{r-1}+m-a & \cdots & l_{r-1}-m+a & m-a\\
            l_1-l_2+l_3-\ldots -l_{r-1}+m-a+1 &  \cdots & l_{r-1}-m+a-1 & m-a+1\\
            \vdots & \vdots &  \vdots & \vdots \\
            l_1-l_2+l_3-\ldots -l_{r-1}+m-1 &  \cdots &  l_{r-1}-m+1 & m-1\\
            l_1-l_2+l_3-\ldots -l_{r-1}+m&  \cdots &  l_{r-1}-m & m\\
        \end{bmatrix}
    \end{equation*} 
    \begin{equation*}
        =
        \begin{bmatrix}
            x_{11} &   \cdots & x_{1,r-1} & x_{1r}\\
            x_{21} &  \cdots & x_{2,r-1} & x_{2r}\\
            \vdots &  \vdots & \vdots & \vdots \\
            x_{a1} &  \cdots & x_{3,r-1} & x_{ar}\\
            x_{a+1,1}  & \cdots & x_{a+1,r-1} & x_{a+1,r}\\
        \end{bmatrix} 
    \end{equation*}
    \end{center}
    where, $2r\leq m \leq n$, $a \in \mathbb{N}$ such that  $x_{1,i-1} < 
    x_{1,i}$, $x_{a+1,j-1}<x_{a+1,j}$, and $x_{11} \geq 1$, where $i=3,5,\ldots, r$, and $j=2,4,\ldots,r-1$. In the above representation, each row is an edge of $E_{l_1,\ldots,l_{r-1}}$. Consider the following  ascending sequence:
    \begin{multline} \label{asc.sequ}
        x_{11},x_{21}, \ldots, x_{a+1,1},x_{a+1,2}, x_{a2}, \ldots, x_{12}, x_{13},\\ x_{23}, \dots,x_{a+1,i},
        x_{a+1,i+1}, \dots, x_{1j}, x_{1,j+1}, \ldots, x_{a+1,r},   
    \end{multline}
    where $i=3,5,\ldots,r-2$  and $j=4,6,\ldots,r-1$. We define a map $\phi: V(E_{l_1,\ldots,l_{r-1}}) \rightarrow \mathbb{Q}$ as
    \begin{equation} \label{r-mapping}
        \begin{split}
            \phi(x_{11})&=t,  \\
            \phi(x_{1i})=\phi(x_{1,i+1})&=-(1+\phi(x_{11})+\ldots+\phi(x_{1,i-1})+\phi(\alpha_1)+\ldots+\phi(\alpha_r-i)),\\
        \end{split}
    \end{equation}
where $\alpha_1$  to  $\alpha_{r-i}$ constitute a sequence of  $r-i$ consecutive elements that commence immediately after $x_{1,i+1}$  from Sequence (\ref{asc.sequ}), and $i=2,4,\ldots, r-1$, such that 
    \begin{equation} 
        \begin{split}
            \phi(x_{12})<0& \text{ and } \phi(x_{12})\geq\phi(x_{13})\geq\ldots\geq\phi(x_{1r}) \text{ and }  \phi(x_{11})+\ldots + \phi(x_{1r})=1, \\
            \phi(x_{\ell r})&=-(1+\phi(x_{\ell-1,1})+\ldots+\phi(x_{\ell-1,r-1})), \\
            \phi(x_{\ell k})&=-(1+\phi(x_{\ell 1})+\ldots+\phi(x_{\ell k-1})+\phi({\beta_1})+\ldots+\phi(\beta_{r-k})), \\
        \end{split}
    \end{equation}
where $\beta_1$  to  $\beta_{r-k}$ constitute a sequence of  $r-k$ consecutive elements that commence immediately after $x_{\ell k+1}$  from Sequence (\ref{asc.sequ}), $\phi(x_{\ell,0})=0$, and $1\leq k \leq r-1$,
    \begin{equation*} 
        \begin{split}
            \phi(x_{\ell-1,i})&>\phi(x_{\ell,i}), \text{ where } i=1,3,\ldots,r,  \\ 
            \phi(x_{\ell-1,j})&<\phi(x_{\ell,j}), \text{ where } j=2,4,\ldots,r-1, \\
            \text{ and } \phi(x_{\ell 1})&>\phi(x_{\ell 2})>\ldots>\phi(x_{\ell r}) \text{ such that }  \phi(x_{\ell 1})+\ldots + \phi(x_{\ell r})=1, \\
        \end{split}
    \end{equation*}
    where $2\leq \ell \leq a+1$, $t \gg 0$ such that  $\phi(x_{a+1,1}) > 0$ and $\phi(x_{a+1,2}) \leq 0$.

    The definition of $\phi$ implies that the following sequence is in descending order:
    \begin{multline} \label{dsc.sequ}
        \phi(x_{11}),\phi(x_{21}), \ldots, \phi(x_{a+1,1}),\phi(x_{a+1,2}), \phi(x_{a2}), \ldots, \phi(x_{12}), \phi(x_{13}),\\ \phi(x_{23}), \dots,\phi(x_{a+1,i}), \phi(x_{a+1,i+1}), \dots, \phi(x_{1j}), \phi(x_{1,j+1}), \ldots, \phi(x_{a+1,r}),  
    \end{multline}
    where $i=3,5,\ldots,r-2$  and $j=4,6,\ldots,r-1$.
    
 \textbf{Case II.} For an even integer $r\geq 4$  we set
    \begin{center}
    \begin{equation*}
        \begin{bmatrix}
            l_1-l_2+l_3-\ldots +l_{r-1}-m & \cdots & l_{r-1}-m & m\\
            l_1-l_2+l_3-\ldots +l_{r-1}-m+1 &  \cdots & l_{r-1}-m+1 & m-1\\
            \vdots & \vdots &  \vdots & \vdots \\
            l_1-l_2+l_3-\ldots +l_{r-1}-m+a-1 &  \cdots &  l_{r-1}-m+a-1 & m-a+1\\
            l_1-l_2+l_3-\ldots +l_{r-1}-m+a&  \cdots &  l_{r-1}-m+a & m-a\\
        \end{bmatrix}
    \end{equation*} 
    \begin{equation*}
        =
        \begin{bmatrix}
            x_{11} &   \cdots & x_{1,r-1} & x_{1r}\\
            x_{21} &  \cdots & x_{2,r-1} & x_{2r}\\
            \vdots &  \vdots & \vdots & \vdots \\
            x_{a1} &  \cdots & x_{3,r-1} & x_{ar}\\
            x_{a+1,1}  & \cdots & x_{a+1,r-1} & x_{a+1,r}\\
        \end{bmatrix} 
    \end{equation*}
    \end{center}
    where, $2r\leq m \leq n$, $a \in \mathbb{N}$ such that  $x_{1,i-1} < 
    x_{1,i}$, $x_{a+1,j-1}<x_{a+1,j}$, and $x_{11} \geq 1$, where $i=3,5,\ldots,r-1$, and $j=2,4,\ldots,r$. In the above representation, each row is an edge of $E_{l_1,\ldots,l_{r-2}}$. Consider the following  ascending sequence:
    \begin{multline} \label{2asc.sequ}
        x_{11},x_{21}, \ldots, x_{a+1,1},x_{a+1,2}, x_{a2}, \ldots, x_{12}, x_{13},\\ x_{23}, \dots,x_{a+1,i},
        x_{a+1,i+1}, \dots, x_{1j}, x_{1,j+1}, \ldots, x_{a+1,r-1}, x_{a+1,r},\ldots, x_{1,r},   
    \end{multline}
    where $i=3,5,\ldots,r-3$  and $j=4,6,\ldots,r-2$. We define a map $\psi: V(E_{l_1,\ldots,l_{r-1}}) \rightarrow \mathbb{Q}$ as
     \begin{equation} \label{r2-mapping}
        \begin{split}
            \psi(x_{11})&=t,  \\
            \psi(x_{1i})=\psi(x_{1,i+1})&=-(1+\psi(x_{11})+\ldots+\psi(x_{1,i-1})+\psi(\alpha_1)+\ldots+\psi(\alpha_r-i)),\\
        \end{split}
    \end{equation}
where $\alpha_1$  to  $\alpha_{r-i}$ constitute a sequence of  $r-i$ consecutive elements that commence immediately after $x_{1,i+1}$  from Sequence (\ref{2asc.sequ}), and $i=2,4,\ldots, r-2$, such that 
    \begin{equation} 
        \begin{split}
            \psi(x_{12})<0& \text{ and } \psi(x_{12})\geq\psi(x_{13})\geq\ldots\geq\psi(x_{1r}) \text{ and }  \psi(x_{11})+\ldots + \psi(x_{1r})=1, \\
            \psi(x_{m r})&=-(1+\psi(x_{m+1,1})+\ldots+\psi(x_{m+1,r-1})), \text{ where } m=1,\ldots,a,\\
            \psi(x_{\ell k})&=-(1+\psi(x_{\ell 1})+\ldots+\psi(x_{\ell k-1})+\psi({\beta_1})+\ldots+\psi(\beta_{r-k})), \\
        \end{split}
    \end{equation}
where $\beta_1$  to  $\beta_{r-k}$ constitute a sequence of  $r-k$ consecutive elements that commence immediately after $x_{\ell k+1}$  from Sequence (\ref{2asc.sequ}), $\psi(x_{\ell,0})=0$, and $1\leq k \leq r-1$,
    \begin{equation*} 
        \begin{split}
            \psi(x_{\ell-1,i})&>\psi(x_{\ell,i}), \text{ where } i=1,3,\ldots,r-1,  \\
            \psi(x_{\ell-1,j})&<\psi(x_{\ell,j}), \text{ where } j=2,4,\ldots,r, \\
            \text{ and } \psi(x_{\ell 1})&>\psi(x_{\ell 2})>\ldots>\psi(x_{\ell r}) \text{ such that }  \psi(x_{\ell 1})+\ldots + \psi(x_{\ell r})=1, \\
        \end{split}
    \end{equation*}
    where $2\leq \ell \leq a+1$, $t \gg 0$ such that  $\psi(x_{a+1,1}) > 0$ and $\psi(x_{a+1,2}) \leq 0$.  

    The definition of $\phi$ implies that the subsequent sequence is in descending order.
    \begin{multline} \label{2dsc.sequ}
        \psi(x_{11}),\psi(x_{21}), \ldots, \psi(x_{a+1,1}),\psi(x_{a+1,2}), \psi(x_{a2}), \ldots, \psi(x_{12}), \psi(x_{13}), \psi(x_{23}), \dots,\psi(x_{a+1,i}), \\ \psi(x_{a+1,i+1}), \dots, \psi(x_{1j}), \psi(x_{1,j+1}), \ldots, \psi(x_{a+1,r-1}), \psi(x_{a+1,r}),\ldots, \psi(x_{1,r}),   
    \end{multline}
    where $i=3,5,\ldots,r-3$  and $j=4,6,\ldots,r-2$.

    Let $E_{l_1,\ldots,l_{r-1}}^c$ be the set all edges in $(V_{E_{l_1,\ldots,l_{r-1}}},E\setminus \{\cup_{(l_1',\ldots,l_{r-1}')<_{lex}(l_1,\ldots,l_{r-1})}E_{l_1',\ldots,l_{r-1}'} \cup E_{l_1,\ldots,l_{r-1}}\})$. From maps $\phi$ and $\psi$ it is clear that $\phi(x_{i1})+\ldots+\phi(x_{ir})=1$ and $\psi(x_{i1})+\ldots+\psi(x_{ir})=1$ for all $i=1,\ldots,a+1$. From maps $\phi$ and $\psi$, and referring to Equations (\ref{dsc.sequ}) and (\ref{2dsc.sequ}) one has $\sum_{i\in e}\phi(i)<0$ and $\sum_{i\in e}\psi(i)<0$ for all $e\in E_{l_1,\ldots,l_{r-1}}^c$, analogous to the proof presented in Claim \ref{claim2} of Theorem \ref{conjpmd}. Hence the matching $E_{l_1,\ldots,l_{r-1}}$ is a positive matching on $\Lambda$.
\end{proof} 

\begin{remark}
    In Theorem \ref{thm.runipmd}, it is necessary to establish the definitions of the maps $\phi$ and $\psi$ to facilitate matching with cardinalities that are less than or equal to $r-2$. In cases where $\lvert E_{l_1,\ldots,l_{r-1}} \rvert > r-2$, these maps can be recursively derived.
\end{remark}

\begin{theorem} \label{thm.runibound}
    Let $H=(V,E)$ be an $r$-uniform hypergraph with $n$ vertices. Then $\pmd(H) \leq n^{r-1}+O(r-2)$.
\end{theorem}
\begin{proof}
    It follows from Proposition \ref{prop.runibound} and Theorem \ref{thm.runipmd}.
\end{proof}
\begin{cor} 
    Let $H$ be an $r$-uniform hypergraph. If $d \geq n^{r-1}+O(r-2)$, then:
    \begin{enumerate}
        \item $L_H^{\mathbb{K}}(d)$ is a radical complete intersection;
        \item $L_H^{\mathbb{K}}(d+1)$ is a prime.
    \end{enumerate}
\end{cor}

\section{Positive matching decomposition} \label{secpmd}
Generally characterising $\pmd$s of a hypergraph is a challenging problem. To begin with, our focus is on comprehending the $\pmd$s within the context of $r$-uniform hypergraph $H=(V,E)$ such that $\mid e_i \cap e_j \mid \leq 1$, where $e_i,e_j \in E$, for all $i,j$.

A \textit{walk} in a $H$ is a sequence of alternate vertices and edges $u_1e_1u_2\ldots e_nu_n$ such that $u_i,u_{i+1} \in e_i$ for all $i$.  
A \textit{matching} in a graph $H$ is a set $M=\{e_1,\ldots, e_n\} \subset E$ such that $e_i \cap e_j = \emptyset$ for all $i,j$.  For $A \subseteq V, H[A]$ denotes the \textit{induced subgraph} of $H$ on the vertex set $A$, that is, $H[A]=(A,\{e\in E \mid e \subseteq A\})$. Throughout this section we assume $H = (V,E)$ be an $r$-uniform hypergraph such that $\mid e_i \cap e_j \mid \leq 1$, where $e_i,e_j \in E$, for all $i,j$, unless otherwise we stated.

\begin{definition} 
    An \textit{alternating walk} in a hypergraph $H$ with respect to a matching $M$ is a walk whose edges alternate between edges of $M$ and $E\setminus M$. 
\end{definition}

\begin{definition}
    Let $H$ be a $3$-uniform hypergraph and $M$ be a matching of $H$. An alternate closed walk in $H$ with respect to $M$ say $W=u_1,a_1,v_1,b_1,u_2,a_2,v_2,b_2, \ldots, v_n,b_n,u_{n+1} = u_1,$ where $u_i,v_i,a_i,b_i \in V$, $\{u_i,a_i,v_i\} \in M$ and $\{v_i,b_i,u_{i+1}\} \in E\setminus M$, is said to be a \textit{strong alternate closed walk} if $a_i=b_j$ for some $j$ and number of times $a_i$ appears in $W$ is equal to number of times $b_j$ appears in $W$ for all $i$.
\end{definition}

\begin{remark}
    Certainly, the definition of a strong alternate closed walk can be naturally extended to any $r$-uniform hypergraph. Moreover, it becomes evident that strong alternate closed walks and alternate closed walks coincide when $r=2$.
\end{remark}

The following theorem gives a characterization of positive matching via strong alternate closed walks. It turns out that a matching $M$ of a hypergraph $H$ is positive if and only if the subgraph induced by $M$ has no strong alternate closed walks with respect to $M$.

\begin{theorem} \label{thm.3-uni}
    Let $H=(V,E)$ be a $3$-uniform hypergraph and $M \subset E$ be a matching. The following conditions are equivalent:
    \begin{enumerate}
        \item $M$ is positive;
        \item The subgraph induced by $M$ does not contain any strong alternate closed walk;
        \item The subgraph induced by every subset $V(N) \subseteq V(M)$ does not contain any subgraph $N_1 \subseteq H[V(N)]$ such that $N \subseteq N_1$ and $\deg_{N_1}(u_i) =\deg_{N_1}(a_i)=\deg_{N_1}(v_i)= k$, where $k\geq 2$ for all $\{u_i,a_i,v_i\} \in N$.    
    \end{enumerate}
\end{theorem}

\begin{proof}
    $(1) \implies (2)$. Suppose $H[V(M)]$ has a strong alternate closed walk with the following vertices $$u_1,a_1,v_1,b_1,u_2,a_2,v_2,b_2, \ldots, v_n,b_n,u_{n+1} = u_1,$$ where $\{u_i,a_i,v_i\} \in M$ and $\{v_i,b_i,u_{i+1}\} \in E(H[V(M)])\setminus M$, for all $i$. Since $M$ is positive there exists a map $\rho: V(M) \rightarrow \mathbb{Z}$ such that $\rho(u_i)+\rho(a_i)+\rho(v_i) > 0$ and $\rho(v_i)+\rho(b_i)+\rho(u_{i+1}) < 0$, for all $i$. WLOG, we can assume that $\rho(u_1)>0$ as $\rho(u_1)+\rho(a_1)+\rho(v_1)>0$. From the positive matching of $M$, we get the following inequalities:
    \begin{equation*}
    \begin{split}
    \rho(u_1) &< -(\rho(v_n)+\rho(b_n)), \\ 
    \rho(u_1) &< -(\rho(v_n)+\rho(b_n)) < \rho(u_n)+\rho(a_n)-\rho(b_n), \\   
    \rho(u_1) &< -(\rho(v_n)+\rho(b_n)) < \rho(u_n)+\rho(a_n)-\rho(b_n) < -(\rho(v_{n-1})+\rho(b_{n-1})-\rho(a_n)+\rho(b_n))), \\
    \vdots & \\
    \rho(u_1) &< \rho(u_1)+\sum_{i=n}^{n}\rho(a_i)-\sum_{i=n}^{n}\rho(b_i), \\
    \end{split}
    \end{equation*}
    which is a contradiction as $a_i=b_j$, for some $j$ and number of times $a_i$ appears is equal to number of times $b_j$ appears for all $i$.

    $(2) \implies (3)$. It follows from Lemma \ref{Lemma.2-3}.

    $(3) \implies (1)$. Assume that $M$ is not positive. Let $N \subset M$ be a minimal such that $N$ is not positive. Let $N=\{e_1,\ldots,e_n\}$ and $N^c = E(H[V(N)])\setminus N = \{e'_1,\ldots, e'_m\}$ be edges and $e_i=\{u_{i1},u_{i2},u_{i3}\}$ for all $i$. Let $\rho : V(N) \rightarrow \mathbb{Z}$ be a map. By not positiveness of $N$ one has $\sum_{u_i \in e_i} \rho(u_i) = y_i$ and $\sum_{u_j \in e'_j} \rho(u_j) = -y_j$, where $y_i,y_j >0$, for all $1\leq i \leq n$ and $1\leq j \leq m$, has no solution. Now we represent the system of linear equations using a matrix by $Ax=Y$, where $x=[\rho(u_{11}),\ldots,\rho(u_{n3})]^t$,  $Y=[y_1,\ldots, y_{n+m}]$. Notice that each row of $A$ corresponds to an edge, and the number of non-zero entries in each column of $A$ corresponds to the degree of the respective vertex. Then we have $\rank(A,Y) > \rank(A)$. Let $R_i$ denotes $i^{th}$ row of matrix $A$. First, $n$ rows of matrix $A$ corresponds to edges of $N$ and rows from $n+1$ to $n+m$ correspond to edges of $N^c$. Therefore we get $a_1R_1 + \ldots + a_nR_n = a_{n+1}R_{n+1} + \ldots + a_{n+m}R_{n+m}$, where $a_i>0$ be a positive integer for all $1 \leq i \leq n$ and $a_j$ be a non negative integer for all $n+1 \leq j \leq n+m$. Then we get $\deg_{N_1}(u_{i1})=\deg_{N_1}(u_{i2})=\deg_{N_1}(u_{i3})=a_i +1$, for all $i$, where edge set of $N_1=\{N \cup \{e'_j \mid a_j\neq 0 \text{ for } n+1 \leq j \leq n+m\}\}$, and $e'_j$  is the edge associated with $R_j$. Hence $H[V(N)]$ has a subgraph $N_1$ such that $N \subseteq N_1$ and $\deg_{N_1}(u_{i1}) =\deg_{N_1}(u_{i2})=\deg_{N_1}(u_{i3})= k$, where $k\geq 2$ for all $\{u_{i1},u_{i2},u_{i3}\} \in N$.
\end{proof}

\begin{notation}
    We write the vertices of an edge as an ordered sequence, and we call the first vertex in the sequence the parent and the last vertex we call the descendant.
\end{notation}

\begin{example}
    Let $H$ be a $3$-uniform hypergraph. For the edge $e=\{1,2,3\} \in E(H)$, consider writing the vertices of $e$ as an ordered sequence $(2,3,1)$. In this sequence, $2$ is the parent and $1$ is the descendant. If we fix a vertex $u$ of the edge $e$, there are exactly two sequences where $u$ is the parent vertex.
\end{example}

In the following, we present a construction on a hypergraph to generate a strong alternate closed walk.

\subsection{Construction} \label{const}
Let $H$ be a $3$-uniform hypergraph and $M$ be a matching of $H$. Let $N\subseteq M$  and $N_1$ be a subgraph of $H[V(N)]$ such that $N \subset N_1$ and the degree of each vertex in an edge is equal and greater than or equal to $2$, for all edges in $N$, i.e. $\deg(u_i) =\deg(a_i)=\deg(v_i)= k$, where $k\geq 2$ for all $\{u_i,a_i,v_i\}=e_i \in N$. Assume that $N$ and $N_1$ are minimal with this property. Let $N = \{e_1,\ldots, e_n\}$ and $N^c=\{e \mid e \in N_1\setminus N\} = \{e'_1,\ldots,e'_m\}$ be edges.

The following construction resembles a rooted tree structure with alternating edges with respect to $N$, and edges of $N$ may repeat. An alternate rooted tree $(H[V(N)],N,u_0)$ is an alternate walk with respect to $N$ with root $u_0$.

\begin{enumerate}
    \item Let $u_0\in V(N)$ be a vertex. Then there exists a unique edge in $N$, which intersects with $u_0$. Let $e_1=\{u_0,a_1,v_1\}$ be an edge in $N$. By fixing the vertex $u_0$, there are exactly two possible sequences where $u_0$ is the parent. This leaves two possibilities for the descendant vertex, as $u_0$ serves as the parent vertex in both sequences. The two descendants of $u_0$ are namely $v_1$ and $a_1$ corresponding to the ordered sequences $u_0,a_1,v_1$ and $u_0,v_1,a_1$, respectively.
    
    \item We will repeat the following process for each $2n^{th}$ step. For a descendant vertex, say $v$, if $2(\deg_{H[V(N)]}(v)-1)$ is greater than the number of times the vertex $v$ appeared in the alternate walk from the root vertex $u_0$ to the descendant vertex $v$ with respect to $N$ in the construction, then there exists at least one edge in $N^c$ which intersects with $v$ and the edge does not appear in the alternate walk from the root vertex $u_0$ to $v$, Now fix $v$, 
    there are exactly two possible sequences where $v$ is the parent. This leaves two possibilities for descendant vertex.  Similarly, we have two descendants for each edge in $N^c$, which intersects with $v$. Otherwise, $v$ is the last descendant.
    \item We will repeat the following process for each $2n+1^{th}$ step. For each descendant vertices, say $u$, if $2(\deg_{H[V(N)]}(u)-1)$ is greater than the number of times $u$ appeared in the alternate walk from the root vertex $u_0$  to the descendant vertex $u$, then there exists a unique edge in $N$ which intersects with $u$. Then, we fix $u$, there are exactly two possible sequences where $u$ is the parent. This leaves two possibilities for descendant vertex. Otherwise, $u$ is the last descendant.
\end{enumerate}

\begin{example}
    Let $H=(V,E)$ be a hypergraph on $[9]$. Let $M=\{\{1,2,3\},\{4,5,6\},\{7,8,9\}\}$ be a matching and $ M^c=\{\{1,4,7\},\{2,5,8\},\{3,6,9\}\}$.

\begin{multicols}{2}

\tikzset{every picture/.style={line width=0.75pt}} 

\begin{tikzpicture}[x=0.6pt,y=0.6pt,yscale=-1,xscale=1]

\draw   [color={rgb, 255:red, 0; green, 0; blue, 240 }  ,draw opacity=1 ] (287.9,558.2) .. controls (281.52,554.8) and (285.48,537.41) .. (296.75,519.36) .. controls (308.01,501.32) and (322.32,489.45) .. (328.7,492.86) .. controls (335.08,496.26) and (331.11,513.65) .. (319.85,531.69) .. controls (308.58,549.74) and (294.28,561.61) .. (287.9,558.2) -- cycle ;
\draw [color={rgb, 255:red, 0; green, 0; blue, 240 }  ,draw opacity=1 ]  (375.72,549.55) .. controls (372.22,557.41) and (357.02,551.77) .. (341.77,536.93) .. controls (326.52,522.1) and (316.99,503.7) .. (320.49,495.84) .. controls (323.99,487.97) and (339.19,493.62) .. (354.44,508.45) .. controls (369.69,523.28) and (379.22,541.68) .. (375.72,549.55) -- cycle ;
\draw  [color={rgb, 255:red, 254; green, 0; blue, 0 }  ,draw opacity=1 ] (257.62,591.91) .. controls (251.63,587.87) and (257.37,570.98) .. (270.45,554.2) .. controls (283.52,537.42) and (298.98,527.1) .. (304.97,531.14) .. controls (310.96,535.19) and (305.22,552.07) .. (292.15,568.86) .. controls (279.07,585.64) and (263.62,595.96) .. (257.62,591.91) -- cycle ;
\draw  [color={rgb, 255:red, 255; green, 0; blue, 0 }  ,draw opacity=1 ] (330.49,598.92) .. controls (324.48,602.94) and (311.01,591.25) .. (300.41,572.81) .. controls (289.81,554.36) and (286.09,536.15) .. (292.1,532.13) .. controls (298.11,528.11) and (311.58,539.8) .. (322.18,558.25) .. controls (332.78,576.69) and (336.5,594.9) .. (330.49,598.92) -- cycle ;
\draw [color={rgb, 255:red, 0; green, 0; blue, 240 }  ,draw opacity=1 ]  (228.9,637.2) .. controls (222.52,633.8) and (226.48,616.41) .. (237.75,598.36) .. controls (249.01,580.32) and (263.32,568.45) .. (269.7,571.86) .. controls (276.08,575.26) and (272.11,592.65) .. (260.85,610.69) .. controls (249.58,628.74) and (235.28,640.61) .. (228.9,637.2) -- cycle ;
\draw  [color={rgb, 255:red, 0; green, 0; blue, 240 }  ,draw opacity=1 ] (284.77,642.55) .. controls (278.05,645.21) and (267.34,630.95) .. (260.85,610.69) .. controls (254.36,590.43) and (254.55,571.85) .. (261.27,569.18) .. controls (267.99,566.51) and (278.7,580.77) .. (285.19,601.03) .. controls (291.68,621.29) and (291.49,639.88) .. (284.77,642.55) -- cycle ;
\draw  [color={rgb, 255:red, 0; green, 0; blue, 240 }  ,draw opacity=1 ] (306.36,640.17) .. controls (299.28,638.79) and (297.43,623.08) .. (302.22,605.09) .. controls (307.02,587.09) and (316.64,573.62) .. (323.72,575) .. controls (330.8,576.38) and (332.65,592.09) .. (327.85,610.08) .. controls (323.06,628.08) and (313.43,641.55) .. (306.36,640.17) -- cycle ;
\draw  [color={rgb, 255:red, 0; green, 0; blue, 240 }  ,draw opacity=1 ] (366.53,636.6) .. controls (361.36,641.66) and (345.98,632.64) .. (332.16,616.46) .. controls (318.35,600.28) and (311.34,583.07) .. (316.51,578.01) .. controls (321.68,572.95) and (337.07,581.97) .. (350.88,598.15) .. controls (364.69,614.33) and (371.7,631.54) .. (366.53,636.6) -- cycle ;
\draw  [color={rgb, 255:red, 254; green, 0; blue, 0 }  ,draw opacity=1 ] (196.62,672.91) .. controls (190.63,668.87) and (196.37,651.98) .. (209.45,635.2) .. controls (222.52,618.42) and (237.98,608.1) .. (243.97,612.14) .. controls (249.96,616.19) and (244.22,633.07) .. (231.15,649.86) .. controls (218.07,666.64) and (202.62,676.96) .. (196.62,672.91) -- cycle ;
\draw  [color={rgb, 255:red, 254; green, 0; blue, 0 }  ,draw opacity=1 ] (239.9,686.01) .. controls (232.71,686.8) and (226.17,670.21) .. (225.28,648.96) .. controls (224.4,627.7) and (229.51,609.83) .. (236.7,609.04) .. controls (243.88,608.25) and (250.43,624.84) .. (251.31,646.1) .. controls (252.19,667.35) and (247.08,685.22) .. (239.9,686.01) -- cycle ;
\draw  [color={rgb, 255:red, 254; green, 0; blue, 0 }  ,draw opacity=1 ] (237.35,785.46) .. controls (230.26,783.88) and (229.49,765.77) .. (235.61,745.01) .. controls (241.74,724.25) and (252.45,708.7) .. (259.53,710.28) .. controls (266.62,711.85) and (267.39,729.96) .. (261.27,750.72) .. controls (255.14,771.49) and (244.43,787.04) .. (237.35,785.46) -- cycle ;
\draw  [color={rgb, 255:red, 254; green, 0; blue, 0 }  ,draw opacity=1 ] (289.82,778.06) .. controls (284.31,781.6) and (271.53,769.36) .. (261.27,750.72) .. controls (251.01,732.09) and (247.15,714.12) .. (252.66,710.58) .. controls (258.16,707.04) and (270.94,719.28) .. (281.21,737.91) .. controls (291.47,756.54) and (295.32,774.52) .. (289.82,778.06) -- cycle ;
\draw  [color={rgb, 255:red, 0; green, 0; blue, 240 }  ,draw opacity=1 ] (204.1,728.78) .. controls (197.28,725.14) and (200.88,707.56) .. (212.15,689.51) .. controls (223.42,671.47) and (238.08,659.79) .. (244.9,663.43) .. controls (251.72,667.07) and (248.11,684.65) .. (236.85,702.69) .. controls (225.58,720.74) and (210.92,732.42) .. (204.1,728.78) -- cycle ;
\draw  [color={rgb, 255:red, 0; green, 0; blue, 240 }  ,draw opacity=1 ] (260.2,734.96) .. controls (253.43,737.51) and (242.98,723.06) .. (236.85,702.69) .. controls (230.72,682.32) and (231.23,663.74) .. (238,661.19) .. controls (244.77,658.64) and (255.22,673.09) .. (261.35,693.46) .. controls (267.48,713.83) and (266.97,732.41) .. (260.2,734.96) -- cycle ;

\draw (319,494) node [anchor=north west][inner sep=0.75pt]   [align=left] {\footnotesize 1};
\draw (304,510.8) node [anchor=north west][inner sep=0.75pt]   [align=left] {\footnotesize 2};
\draw (356,528.8) node [anchor=north west][inner sep=0.75pt]   [align=left] {\footnotesize 2};
\draw (337,508.8) node [anchor=north west][inner sep=0.75pt]   [align=left] {\footnotesize 3};
\draw (290,533) node [anchor=north west][inner sep=0.75pt]   [align=left] {\footnotesize 3};
\draw (270.45,554.2) node [anchor=north west][inner sep=0.75pt]   [align=left] {\footnotesize 6};
\draw (302,555) node [anchor=north west][inner sep=0.75pt]   [align=left] {\footnotesize 9};
\draw (315,579) node [anchor=north west][inner sep=0.75pt]   [align=left] {\footnotesize 6};
\draw (259,573) node [anchor=north west][inner sep=0.75pt]   [align=left] {\footnotesize 9};
\draw (243,591) node [anchor=north west][inner sep=0.75pt]   [align=left] {\footnotesize 7};
\draw (230,614.8) node [anchor=north west][inner sep=0.75pt]   [align=left] {\footnotesize 8};
\draw (268,598) node [anchor=north west][inner sep=0.75pt]   [align=left] {\footnotesize 8};
\draw (276.04,623.18) node [anchor=north west][inner sep=0.75pt]   [align=left] {\footnotesize 7};
\draw (307,599) node [anchor=north west][inner sep=0.75pt]   [align=left] {\footnotesize 5};
\draw (304,619) node [anchor=north west][inner sep=0.75pt]   [align=left] {\footnotesize 4};
\draw (334,595) node [anchor=north west][inner sep=0.75pt]   [align=left] {\footnotesize 4};
\draw (351,615) node [anchor=north west][inner sep=0.75pt]   [align=left] {\footnotesize 5};
\draw (209.45,635.2) node [anchor=north west][inner sep=0.75pt]   [align=left] {\footnotesize 5};
\draw (197,649.8) node [anchor=north west][inner sep=0.75pt]   [align=left] {\footnotesize 2};
\draw (234,640.8) node [anchor=north west][inner sep=0.75pt]   [align=left] {\footnotesize 2};
\draw (234,665.8) node [anchor=north west][inner sep=0.75pt]   [align=left] {\footnotesize 5};
\draw (217,685.8) node [anchor=north west][inner sep=0.75pt]   [align=left] {\footnotesize 4};
\draw (205,702.8) node [anchor=north west][inner sep=0.75pt]   [align=left] {\footnotesize 6};
\draw (243.9,690.01) node [anchor=north west][inner sep=0.75pt]   [align=left] {\footnotesize 6};
\draw (250,711.8) node [anchor=north west][inner sep=0.75pt]   [align=left] {\footnotesize 4};
\draw (241,736.8) node [anchor=north west][inner sep=0.75pt]   [align=left] {\footnotesize 1};
\draw (234,757) node [anchor=north west][inner sep=0.75pt]   [align=left] {\footnotesize 7};
\draw (267.2,739) node [anchor=north west][inner sep=0.75pt]   [align=left] {\footnotesize 7};
\draw (280,757) node [anchor=north west][inner sep=0.75pt]   [align=left] {\footnotesize 1};

\draw (280,643) node [anchor=north west][inner sep=0.75pt]   [align=left] {\footnotesize $\vdots$};

\draw (305,640) node [anchor=north west][inner sep=0.75pt]   [align=left] {\footnotesize $\vdots$};

\draw (360,635) node [anchor=north west][inner sep=0.75pt]   [align=left] {\footnotesize $\vdots$};

\draw (370,550) node [anchor=north west][inner sep=0.75pt]   [align=left] {\footnotesize $\vdots$};

\end{tikzpicture}

Observe that $\deg_H(i)=2$ for all $i \in V$. We will now generate a strong alternate closed walk using an alternate rooted tree. Let $(H,M,1)$ be an alternate rooted tree with root $1$. Then the alternating edges $\{1,2,3\}$, $\{3,6,9\}$, $\{9,7,8\}$, $\{8,2,5\}$, $\{5,6,4\}$ and $\{4,7,1\}$ forms a strong alternate closed walk in $H$. And the alternate edges $\{1,2,3\}$, $\{3,6,9\}$, $\{9,8,7\}$, and $\{7,4,1\}$ is an alternate closed walk but not a strong alternate closed walk. Consequently, based on the construction, it can be deduced that the longest alternate walk with the same starting and ending vertex yields a strong alternate closed walk in $H$.
\end{multicols}

\end{example}

\begin{lemma} \label{Lemma.2-3}
    Let $H$ be a hypergraph and $M \subset E$ be a matching. If the subgraph induced by $M$ does not contain any strong alternate closed walk, then the subgraph induced by every subset $V(N) \subseteq V(M)$ does not contain any subgraph $N_1 \subseteq H[V(N)]$ such that $N \subseteq N_1$ and $\deg_{N_1}(u_i) =\deg_{N_1}(a_i)=\deg_{N_1}(v_i)= k$, where $k\geq 2$ for all $\{u_i,a_i,v_i\} \in N$.
\end{lemma}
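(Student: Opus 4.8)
The plan is to prove the contrapositive: assuming that some $N \subseteq M$ admits a subgraph $N_1$ with $N \subseteq N_1$ in which every vertex of every edge of $N$ has degree $k \geq 2$ (with the three degrees equal within each edge), I will produce a strong alternate closed walk inside $H[M]$. Passing to a pair $(N,N_1)$ that is minimal with this property, as in the Construction of Subsection~\ref{const}, I set $N = \{e_1,\dots,e_n\}$ and $N^c = N_1\setminus N = \{e_1',\dots,e_m'\}$. Since $M$ is a matching, every edge of $N^c$ meets two disjoint edges of $N\subseteq M$ and therefore cannot belong to $M$; hence $N^c \subseteq E(H[M])\setminus M$, and any alternate walk built from $N$ and $N^c$ is automatically an alternate walk in $H[M]$ with respect to $M$.

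Next I would run the alternate rooted tree construction $(H, N, u_0)$ of Subsection~\ref{const} starting from an arbitrary vertex $u_0 \in V(N)$. Because every vertex $v$ of an $N$-edge has degree $k \geq 2$ in $N_1$, it lies in its unique matching edge together with $k-1 \geq 1$ non-matching edges, so the branching rules never stall before saturation: a descendant $v$ can be promoted to a parent as long as it has not exhausted its budget $2(\deg(v)-1)$. As $V(N)$ is finite, the tree is finite; and since no vertex is a genuine dead end (each has degree $\geq 2$), budget-saturation can only be achieved consistently if some branch returns to the root $u_0$. Following the longest alternate walk from $u_0$ back to $u_0$ then yields a closed alternate walk $W = u_1, a_1, v_1, b_1, u_2, \dots, v_s, b_s, u_{s+1} = u_1$ with $\{u_i,a_i,v_i\} \in N$ and $\{v_i,b_i,u_{i+1}\} \in N^c$ for all $i$.

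It remains to verify that $W$ is \emph{strong}, i.e.\ that the multiset of matching-edge middles $\{a_1,\dots,a_s\}$ coincides with the multiset of non-matching-edge middles $\{b_1,\dots,b_s\}$; this is the step I expect to be the main obstacle. The idea is that each vertex $v$ enters $W$ only through its incidences in $N_1$, and the budget $2(\deg(v)-1)$ built into the construction forces $W$ to use $v$'s single matching incidence and its $k-1$ non-matching incidences in a balanced fashion. Concretely, because $W$ is closed and $\lvert e_i\cap e_j\rvert \leq 1$ forces consecutive edges of $W$ to overlap in exactly one connector vertex, each appearance of $v$ as the middle of its matching edge must be balanced by an appearance of $v$ as the middle of one of its non-matching edges. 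Here minimality of $(N,N_1)$ is essential: it rules out a proper closed alternate sub-walk that could absorb the imbalance, so the count of matching-middles and non-matching-middles agrees at every vertex. Once the identity $\{a_i\}=\{b_i\}$ of multisets is established, $W$ meets the definition of a strong alternate closed walk and the contrapositive is complete.

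I would close by remarking that the Example of Subsection~\ref{const} exhibits exactly this dichotomy: the longest root-returning walk is strong, whereas a shorter root-returning walk need not be. This is precisely why selecting the \emph{longest} (equivalently, budget-saturating) walk is indispensable, and a careful bookkeeping of the number of appearances of each vertex against its budget $2(\deg(v)-1)$ is what makes the balance argument of the preceding paragraph rigorous.
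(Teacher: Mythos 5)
Your proposal follows the paper's proof exactly: the paper likewise argues the contrapositive, passes to a minimal pair $(N,N_1)$, and invokes the alternate rooted tree construction of Subsection~\ref{const} to extract a strong alternate closed walk as the longest walk returning to the root --- precisely your argument. Where you flag the multiset balance of the middle vertices as the main obstacle, the published proof is in fact terser still (it simply asserts that the construction yields a strong walk), so your budget-counting and minimality sketch supplies at least as much justification at that step as the paper itself does.
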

\begin{proof}
    Suppose there exists a subset $V(N) \subseteq V(M)$ such that $H[V(N)]$ has a subgraph $N_1 \supseteq N$ with $\deg_{N_1}(u_i) =\deg_{N_1}(a_i)=\deg_{N_1}(v_i)= k$, where $k\geq 2$ for all $\{u_i,a_i,v_i\} \in N$. Assume that $N$ and $N_1$ are minimal with this property. Now, we construct an alternating closed walk in a subgraph $N_1$ with respect to $N$. From the construction \ref{const}, it follows that there exists a strong alternate closed walk in $(H[V(N)],N,u_1)$ such that $\deg_{N_1}(u_i) =\deg_{N_1}(a_i)=\deg_{N_1}(v_i)= k$, where $k\geq 2$ for all $\{u_i,a_i,v_i\} \in N$, which is a contradiction. 
\end{proof}

The concept of extension for $r$-uniform hypergraphs can be applied analogously, enabling us to broaden the understanding of their structural properties and relationships. 

\begin{theorem} \label{thm.r-uni}
    Let $H=(V,E)$ be an $r$-uniform hypergraph and $M \subset E$ be a matching. The following conditions are equivalent:
    \begin{enumerate}
        \item $M$ is positive;
        \item The subgraph induced by $M$ does not contain any strong alternate closed walk;
        \item The subgraph induced by every subset $V(N) \subseteq V(M)$ does not contain any subgraph $N_1 \subseteq H[V(N)]$ such that $N \subseteq N_1$ and $\deg_{N_1}(u_{i1}) =\deg_{N_1}(u_{i2})=\ldots=\deg_{N_1}(u_{ir})= k$, where $k\geq 2$ for all $\{u_{i1},\ldots,u_{ir}\} \in N$.  
    \end{enumerate}
\end{theorem}
\begin{proof}
    The proof is similar to the proof of Theorem \ref{thm.3-uni}.
\end{proof}

\begin{remark}
    Note that we obtain \cite[Theorem 2.1]{FG2022} as one specific case of Theorem \ref{thm.r-uni}.
\end{remark}

In the following theorem, we provide necessary and sufficient conditions for a matching with carnality equal to $2$ to be a positive matching in an arbitrary $r$-uniform hypergraph.

\begin{theorem} \label{pos.M_2}
    Let $H=(V,E)$ be an arbitrary $r$-uniform hypergraph. Let $M \subset E$ be a matching with cardinality of $M$ equal to $2$. Set $M=\{\{x_1,\ldots,x_r\},\{x_{r+1},\ldots,x_{2r}\}\}$, where $x_i$'s are distinct vertices of $H$. Then $M$ is positive if and only if an edge $\{x_{i_1},\ldots, x_{i_r}\} \in E(H[V(M)])\setminus M$ then $V(M) \setminus \{x_{i_1},\ldots, x_{i_r}\} \notin E(H[V(M)])$.
\end{theorem}
\begin{proof}
    Suppose $M$ is positive and edges $\{x_{i_1},\ldots, x_{i_r}\}, V(M) \setminus \{x_{i_1},\ldots, x_{i_r}\} \in E(H[V(M)])$. By positiveness of $M$, there exists a weight function $\mu$ such that it satisfies Equation \ref{pmd_condition}. This implies that $\sum_{i=1}^{2r}\mu(x_i) > 0$ and $\sum_{i=1}^{2r}\mu(x_i) < 0$, since $\{x_{i_1},\ldots, x_{i_r}\}$ and  $V(M) \setminus \{x_{i_1},\ldots, x_{i_r}\}$ are edges of $H[V(M)]$. This contradicts the positivity of $M$.

    Conversely, suppose an edge $\{x_{i_1},\ldots, x_{i_r}\} \in E(H[V(M)])$ and $V(M) \setminus \{x_{i_1},\ldots, x_{i_r}\} \notin E(H[V(M)])$. Then we show that $M$ is positive by induction on the number of edges in $E(H[V(M)]) \setminus M$. Let $t$ be the number of edges in $E(H[V(M)]) \setminus M$. For the case $t=1$, the statement follows from Remark \ref{Rem.pos}(b). Now, assume the statement is true for $t=s-1$. Suppose for $t=s$, $M$ is not positive. By non positiveness of $M$ one has, for every map $\rho$, the $\sum_{u_i \in e_i} \rho(u_i) = y_i$ and $\sum_{u_j \in e'_j} \rho(u_j) = -y_j$, where $y_i,y_j >0$, $e_i \in M$, $e_j' \in E(H[V(M)])\setminus M$ for all $1\leq i \leq 2$ and $3\leq j \leq s+2$, has no solution. Now we represent the system of linear equations using a matrix by $Ax=Y$, where $x=[\rho(x_1),\ldots,\rho(x_{2r})]^t$,  $Y=[y_1,\ldots, y_{s+2}]$. Then we have $\rank(A,Y) > \rank(A)$. Let $R_i$ denotes $i^{th}$ row of matrix $A$. First, $2$ rows of matrix $A$ correspond to edges of $M$, and rows from $3$ to $s+2$ correspond to edges of $E(H[V(M)]) \setminus M$. By induction hypothesis, it follows that removing a row, say $R_k$, where $3 \leq k \leq s+2$, then there exists a $\rho$ such that $\rank(A\setminus R_k,Y) = \rank(A \setminus R_k)$. Therefore, we get $a_1R_1 + a_2R_2 = (a_{3}R_{3} + \ldots \hat{a_{k}R_{k}}+ \ldots + a_{s+2}R_{s+2}) + a_{k}R_{k}$, where $a_1,a_2,a_k>0$ be a positive integers and $a_j$ be a non negative integer for all $3 \leq j \neq k \leq s+2$. Let $R_{k}$ correspond to an $\{x_{i_1},\ldots, x_{i_r}\}$. Then the rows $R_3, \ldots, \hat{R_K}, \ldots, R_{s+2}$ has an edge correspond to $V(M) \setminus \{x_{i_1},\ldots, x_{i_r}\} \in E(H[V(M)])$ which is a contradiction. Thus, we get $M$ is positive as desired.
\end{proof}

\begin{definition}
    Let $H=(V,E)$ be an $r$-uniform hypergraph on $[n]$. Then $H$ is called 
    \begin{enumerate}
        \item \textit{good forest} if there exist a sequence of edges $e_1,\dots,e_m$ such that $\lvert V_{\{e_1,\ldots,e_i\}}\cap V_{e_{i+1}}\rvert \leq 1$, for all $1\leq i \leq m-1$, where $m=\lvert E\rvert$. We call it a good tree if $H$ is connected.
        \item \textit{loose cycle} if $E=\{\{1,\dots,r\},\{r,\dots,2r-1\},\ldots,\{n-r-2,\dots,n,1\}\}$, denote it by $C_{(r-1)m}$, where $m>1$ and $(r-1)m=n$.
    \end{enumerate}
\end{definition}

\begin{remark}
    If $H=(V,E)$ be a good forest then $e_i\cap e_j\leq 1$, for all $e_i,e_j\in E$. Note that good trees are $r$-uniform hypertrees (see \cite[Definition 3.1]{GW2023}). 
\end{remark}

Applying Theorem \ref{thm.r-uni} in a straightforward manner immediately yields the following results.

\begin{cor} \label{pmd.tree}
    Let $H$ be an $r$-uniform good forest and denote by $\Delta(H)$ the maximal degree of a vertex in $H$. Then $\pmd(H)=\Delta(H)$.
\end{cor}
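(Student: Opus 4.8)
The plan is to prove $\pmd(H)=\Delta(H)$ for a good forest $H$ by establishing the two inequalities separately, using Theorem \ref{thm.r-uni} to reduce the positivity of a matching to the absence of a strong alternate closed walk. The lower bound $\pmd(H)\geq\Delta(H)$ is the easy direction: fix a vertex $v$ of maximal degree $\Delta(H)$, and observe that the $\Delta(H)$ edges incident to $v$ are pairwise non-disjoint (they all contain $v$), so no two of them can lie in the same matching. Consequently any pm-decomposition $E=\cup_{i=1}^p M_i$ must place these $\Delta(H)$ edges in $\Delta(H)$ distinct parts, forcing $p\geq\Delta(H)$.

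For the upper bound $\pmd(H)\leq\Delta(H)$, I would construct an explicit pm-decomposition into $\Delta(H)$ parts. The natural approach is to use the defining sequence $e_1,\dots,e_m$ of the good forest, where $\lvert\{e_1,\ldots,e_i\}\cap e_{i+1}\rvert\leq 1$, to assign colors (part indices) greedily. Each edge $e_{i+1}$ meets the union of previously processed edges in at most one vertex, so the edges of $H$ can be given an acyclic tree-like incidence structure. I would color the edges with colors $\{1,\dots,\Delta(H)\}$ so that any two edges sharing a vertex receive different colors; since at each vertex at most $\Delta(H)$ edges meet and the good-forest sequence guarantees that when we add $e_{i+1}$ it conflicts (shares a vertex) with at most $\Delta(H)-1$ already-colored edges through that single shared vertex, a greedy coloring using $\Delta(H)$ colors succeeds. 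Each color class $M_c$ is then automatically a matching.

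The heart of the argument is showing each color class $M_c$ is in fact a \emph{positive} matching on the relevant induced subhypergraph, which by Theorem \ref{thm.r-uni} amounts to checking that $H[M_c]$ contains no strong alternate closed walk. Here I would invoke the forest structure: a strong alternate closed walk would trace out a genuine cycle in the incidence pattern of the edges, alternating between edges of the matching $M_c$ and edges of $E\setminus M_c$, and returning to its start. But the good-forest condition $\lvert\{e_1,\ldots,e_i\}\cap e_{i+1}\rvert\leq 1$ precludes cycles in this incidence structure; more precisely, each new edge attaches to the previously built structure at a single vertex, so the "edge adjacency" skeleton is acyclic, and no closed alternating walk (let alone a strong one) can exist. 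Therefore each $M_c$ satisfies condition (2) of Theorem \ref{thm.r-uni} and is positive, completing the decomposition into $\Delta(H)$ positive matchings.

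I expect the main obstacle to be making the acyclicity argument fully rigorous, namely translating the sequential good-forest condition into a clean statement that $H[M_c]$ admits no strong alternate closed walk. One must be careful that a strong alternate closed walk is permitted to reuse edges of $N$ and visits a subtle degree-balanced configuration (as encoded in condition (3) of Theorem \ref{thm.r-uni}), so the argument cannot simply cite "trees have no cycles" — it must rule out the degree-$k$ subgraph $N_1\supseteq N$ with all vertices of each $N$-edge having equal degree $k\geq 2$. The cleanest route is to show that the minimality of such an $N_1$ together with the good-forest ordering forces a leaf edge (an edge attached at a single cut vertex whose other vertices have degree $1$ in $H[N_1]$), contradicting the requirement that every vertex of every $N$-edge have degree $k\geq 2$. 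This leaf-peeling step is the part that demands genuine care.
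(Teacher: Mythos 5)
Your proposal is correct and follows essentially the same route as the paper, which states this corollary with no written proof beyond ``Applying Theorem \ref{thm.r-uni} in a straightforward manner immediately yields the following results'': the trivial lower bound $\pmd(H)\geq\Delta(H)$ at a maximum-degree vertex, a greedy $\Delta(H)$-coloring along the good-forest edge sequence (each $e_{i+1}$ conflicts with at most $\Delta(H)-1$ earlier edges through its single attachment vertex), and then ruling out the degree-balanced configuration of condition (3) of Theorem \ref{thm.r-uni}. The leaf-peeling step you flagged closes exactly as you anticipate, and minimality of $N_1$ is not even needed: take the last edge $e$ of $N_1$ in the good-forest ordering, so $e$ meets the union of the earlier edges in at most one vertex and hence has $r-1$ vertices of degree one in $N_1$; since every vertex of $e$ lies in $V(N)$ and thus in some edge of $N\subseteq N_1$, each degree-one vertex can be covered only by $e$ itself, forcing $e\in N$ and contradicting the requirement $\deg(u_{i1})=\cdots=\deg(u_{ir})=k\geq 2$ for edges of $N$.
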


\begin{cor}
    Let $H$ be an $r$-uniform good forest on $[n]$. Then for $\Delta(H) +1 \leq d \leq \frac{1}{n}\lceil \binom{n+r-1}{n-1}\rceil -1$, every coordinate sections of the variety $S_{n,r}^d$ with respect to $H$ are irreducible.
\end{cor}

\begin{remark}
    Since hypertree (see \cite[Definition 3.1]{GW2023}) does not have any closed walk, we obtain \cite[Theorem 1.4]{GW2023}, which asserts that for an $r$-uniform hypertree $H$, one has $\pmd(H)=\Delta(H)$.
\end{remark}
 
\begin{cor}
    Let $C_{(r-1)m}$ be a loose cycle, where $r>2$ and $m\geq2$. Then $$\pmd(C_{(r-1)m})=\begin{cases}
        2, & \text{if $m$ is even},\\
        3, & \text{if $m$ is odd}.
    \end{cases}$$
\end{cor}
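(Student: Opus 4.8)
The plan is to determine $\pmd(C_{(r-1)m})$ by invoking the characterization from Theorem \ref{thm.r-uni}: a matching $M \subseteq E$ of a loose cycle is positive if and only if $H[M]$ contains no strong alternate closed walk. First I would record the structure of the loose cycle: its edges $e_1,\dots,e_m$ are arranged cyclically with $\lvert e_i \cap e_{i+1}\rvert = 1$ (indices mod $m$), so that consecutive edges share exactly one vertex and non-consecutive edges are disjoint. Since every vertex lies in at most two edges, we have $\Delta(C_{(r-1)m}) = 2$. The key elementary observation is that a subset $M \subseteq E$ is a matching precisely when it contains no two cyclically consecutive edges; thus a matching is an independent set in the cycle $C_m$ formed by treating each hyperedge as a vertex. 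This reduces the problem to covering the edge set of the loose cycle by independent sets of $C_m$, which is exactly a proper edge-coloring-type partition whose minimum size equals the chromatic number of $C_m$: namely $2$ when $m$ is even and $3$ when $m$ is odd.

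Next I would establish the lower bound. When $m$ is odd, the $m$ edges of the loose cycle cannot be partitioned into two matchings, since any attempt to $2$-color the edges so that consecutive edges receive different colors fails on an odd cycle; hence $\pmd \geq 3$. When $m$ is even, any single matching is a proper subset of $E$ (one cannot take all $m$ edges, as consecutive edges intersect), so $\pmd \geq 2$. For the upper bound I would exhibit explicit positive matching decompositions. When $m$ is even, partition the edges into $M_1 = \{e_1, e_3, \dots, e_{m-1}\}$ and $M_2 = \{e_2, e_4, \dots, e_m\}$; each $M_i$ consists of pairwise disjoint edges, so $H[M_i]$ is a disjoint union of single edges and contains no closed walk at all, hence no strong alternate closed walk, so each $M_i$ is positive on the appropriate induced hypergraph by Theorem \ref{thm.r-uni}. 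When $m$ is odd, I would split into three parts, e.g. $M_1 = \{e_1, e_3, \dots, e_{m-2}\}$, $M_2 = \{e_2, e_4, \dots, e_{m-1}\}$, and $M_3 = \{e_m\}$, again each being a matching of pairwise disjoint edges inducing no strong alternate closed walk.

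The main subtlety, and where I would spend the most care, is verifying that each part $M_i$ is genuinely a \emph{positive} matching on the successively reduced hypergraph $(V, E \setminus \cup_{j<i} M_j)$, not merely a matching. Here the cleanest argument is that within each $M_i$ the chosen edges are pairwise vertex-disjoint, so the induced subgraph $H[M_i]$ has every vertex of degree $1$; consequently no strong alternate closed walk can exist (such a walk requires vertices of degree at least $2$, per condition (3) of Theorem \ref{thm.r-uni}), and positivity follows immediately. One should double-check the boundary behavior in the odd case where the single leftover edge $e_m$ is adjacent to both $e_1$ and $e_{m-1}$, but since $\{e_m\}$ is a singleton matching, Remark \ref{Rem.pos}(2) applies directly and guarantees it is positive on the remaining hypergraph. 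Combining the matching lower bounds with these explicit decompositions yields the stated values of $\pmd(C_{(r-1)m})$.
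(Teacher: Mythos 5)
Your overall strategy matches the paper's (the paper offers no written proof, deriving the corollary ``in a straightforward manner'' from Theorem \ref{thm.r-uni}), and your lower bounds via independent sets in the cycle $C_m$ of edges are correct. However, the justification of the key positivity step has a genuine gap: you infer that because the edges of $M_i$ are pairwise vertex-disjoint, ``$H[M_i]$ is a disjoint union of single edges'' with every vertex of degree $1$, hence free of strong alternate closed walks. That implication is false in general, and it is exactly the point where the hypothesis $r>2$ must enter. Positivity of $M_i$ is tested on the hypergraph induced by $V_{M_i}$, which may contain edges of the current edge set \emph{not} in $M_i$; pairwise disjointness of $M_i$ says nothing about those. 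Indeed, your argument applied verbatim with $r=2$ to an even graph cycle would yield $\pmd(C_m)=2$: there $M_1=\{e_1,e_3,\dots,e_{m-1}\}$ is a perfect matching whose vertex span is all of $V$, so $H[M_1]$ is the entire cycle and contains an alternate closed walk --- which is precisely why the paper's remark following the corollary records $\pmd(C_m)=3$ for even graph cycles. What rescues your decompositions for $r>2$ is the observation you never state: every edge of the loose cycle has $r-2\geq 1$ interior vertices of degree one, lying in no other edge; hence any edge $e\notin M_i$ satisfies $e\not\subseteq V_{M_i}$, so $H[M_i]$ genuinely consists of the edges of $M_i$ alone, and Theorem \ref{thm.r-uni} applies. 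With that one line added, your upper-bound argument is complete.

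A smaller boundary point: for $m=2$ the two edges of $C_{2(r-1)}$ intersect in the two vertices $\{1,r\}$, so your structural claim that consecutive edges share exactly one vertex fails, as does the standing hypothesis $\lvert e_i\cap e_j\rvert\leq 1$ under which Theorem \ref{thm.r-uni} is proved. This case should be handled directly: $E$ itself is not a matching, so $\pmd\geq 2$, and each singleton $\{e_i\}$ is a positive matching by Remark \ref{Rem.pos}(2), again using the interior degree-one vertices available since $r>2$.
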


\begin{remark}
    Let $m>2$ and $C_m$ be a cycle graph. Then $\pmd(C_m)=3$, since even cycles have alternate closed walk as an induced subgraph. 
\end{remark}

An edge of an $r$-uniform hypergraph is said to be a \textit{pendant} if it has a vertex of degree one. Note that pendant edges do not contribute to strong alternate closed walks. The following result is analogous to \cite[Theorem 2.3]{FG2022}.

\begin{cor}  \label{jointpendant}
    Let $H$ and $H'$ are $r$-uniform hypergraphs. If $H$ can be obtained from $H'$ by adding pendent edges such that each pendant edge has $r-1$ vertices of degree $1$, then $\pmd(H)=\max\{\pmd(H'),\Delta(H)\}$.
\end{cor}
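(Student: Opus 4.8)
The plan is to use the characterization of positive matchings given in Theorem~\ref{thm.r-uni}, reducing the statement about $\pmd$ to a statement about strong alternate closed walks, and then to exploit the fact that pendant edges (those having $r-1$ vertices of degree one) cannot participate in any such walk. First I would establish the lower bound $\pmd(H)\geq \max\{\pmd(H'),\Delta(H)\}$. Since $H'$ is an induced subhypergraph of $H$, every pm-decomposition of $H$ restricts to one of $H'$, whence $\pmd(H)\geq\pmd(H')$; and in general $\pmd(H)\geq\Delta(H)$ because a vertex of degree $\Delta(H)$ lies in $\Delta(H)$ pairwise-intersecting edges, no two of which can ever lie in the same matching, forcing at least $\Delta(H)$ parts.

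For the upper bound I would take an optimal pm-decomposition $E(H')=\bigcup_{i=1}^{p}M_i$ of $H'$ with $p=\pmd(H')$ parts, and a proper edge-colouring of the pendant edges using $k=\max\{\pmd(H'),\Delta(H)\}$ colours, which is possible: the pendant edges incident to a common vertex $v$ number at most $\deg_H(v)\leq\Delta(H)\leq k$, and since each pendant edge has $r-1$ vertices of degree one, two pendant edges intersect in at most one vertex, so greedily the pendant edges can be partitioned into $k$ matchings $P_1,\dots,P_k$. I would then enlarge the decomposition of $H'$ to one of $H$ by setting $M_i':=M_i\cup P_i$ (with $M_i:=\emptyset$ for $i>p$), and argue that each $M_i'$ is a positive matching on the appropriate residual hypergraph. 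The key point, invoked from Theorem~\ref{thm.r-uni}, is that a matching is positive precisely when its induced subhypergraph contains no strong alternate closed walk; since every pendant edge contributes a degree-one vertex, it can never be an edge of such a closed walk, so adjoining $P_i$ to $M_i$ creates no new strong alternate closed walk, and $M_i'$ remains positive.

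The main obstacle I expect is the bookkeeping in verifying that $M_i'$ is genuinely a positive matching on the residual hypergraph $(V,E\setminus\bigcup_{j<i}M_j')$ and not merely a matching: one must check that adding the pendant edges of $P_i$ does not destroy positivity in the presence of the already-placed edges, and that the interaction between the non-pendant part $M_i$ and the pendant part $P_i$ is harmless. Here I would lean on Remark~\ref{Rem.pos}(2), which allows one to adjoin an edge $A$ carrying a vertex $a$ that lies in no other remaining edge inside $V_{M_i'}$; a pendant edge supplies exactly such a degree-one vertex, so its adjunction preserves positivity one edge at a time. Chaining this observation across all pendant edges of $P_i$ completes the argument that $M_i'$ is positive, giving $\pmd(H)\leq k$ and hence, combined with the lower bound, the desired equality $\pmd(H)=\max\{\pmd(H'),\Delta(H)\}$.
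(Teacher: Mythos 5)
Your overall strategy coincides with the paper's intended one: the paper states this corollary without a written proof, as a direct application of Theorem~\ref{thm.r-uni} together with the observation that a pendant edge with $r-1$ degree-one vertices cannot occur in a strong alternate closed walk (analogous to \cite[Theorem 2.3]{FG2022}), and your lower bound and your chaining of Remark~\ref{Rem.pos}(2) flesh this out correctly. Indeed, the Remark~\ref{Rem.pos}(2) mechanism is arguably the cleaner route, since Theorem~\ref{thm.r-uni} lives under the section's standing hypothesis $\lvert e_i\cap e_j\rvert\leq 1$, whereas Remark~\ref{Rem.pos}(2) needs no such assumption. However, there is one concrete gap in your upper-bound construction: you partition the pendant edges into $k$ matchings $P_1,\dots,P_k$ by a greedy colouring that only avoids conflicts \emph{among pendant edges}, and then set $M_i':=M_i\cup P_i$. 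But Remark~\ref{Rem.pos}(2) presupposes that $M\cup\{A\}$ is a matching, and a pendant edge can meet an edge of $M_i$ at its unique vertex of degree $>1$. For instance, let $H'$ be the single edge $\{1,2,3\}$ with the trivial decomposition $M_1=\{\{1,2,3\}\}$, and attach two pendant edges $\{1,a,b\}$ and $\{1,c,d\}$; then $k=\Delta(H)=3$, yet your greedy step may place $\{1,a,b\}$ into $P_1$, making $M_1'$ fail to be a matching at the vertex $1$.

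The repair uses exactly the counting you already invoke, applied with the parts of the decomposition as forbidden colours: colour the pendant edges at a vertex $v$ one at a time, forbidding for each the indices of all parts that already contain an edge through $v$, whether an edge of $E(H')$ or a previously coloured pendant edge. All edges of $H$ through $v$ must occupy pairwise distinct parts, and there are $\deg_H(v)\leq\Delta(H)\leq k$ of them, so a free index always exists; since pendant edges at distinct vertices never conflict, this produces $P_1,\dots,P_k$ with each $M_i\cup P_i$ a matching. With this amendment your verification goes through as you sketched: any residual pendant edge not in $P_i$ contains a degree-one vertex outside $V_{M_i'}$, so by Remark~\ref{Rem.pos}(1) it is invisible to the positivity check, and applying Remark~\ref{Rem.pos}(2) once per edge of $P_i$ (using its degree-one vertex as the distinguished vertex $a$) shows $M_i'$ is a positive matching on the residual hypergraph, giving $\pmd(H)\leq k$ and hence the stated equality.
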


\begin{remark}
    Using Corollary \ref{jointpendant} one can derive the $\pmd$ of uni-loose-cyclic hypergraphs. These hypergraphs are identified by having only one closed walk, and this closed walk forms a loose cycle.
\end{remark}

\begin{definition} \cite[Definition 2.4]{CDG2020} \label{def.cs}
    Let $S=\mathbb{K}[x_{ij} \mid i\in [n], j \in[d]]$ be a polynomial ring with $\mathbb{Z}^n$ multigraded induced by $\deg(x_{ij})=\mathfrak{e_i} \in \mathbb{Z}^n$. Let $T=\mathbb{K}[x_{i1} \mid i\in [n]]$  be a polynomial ring with $\mathbb{Z}^n$ multigraded structure induced by that of $S$. A $\mathbb{Z}^n$-graded ideal $I$ of $S$ is said to be Cartwright-Sturmfels ideal if a radical Borel fixed ideal $J$ exists with the same multigraded Hilbert-series. An ideal $I \subset S$ is said to be Cartwright-Sturmfels$^*$ ideal if there exists a $\mathbb{Z}^n$-graded ideal $J$ of $S$ extended from $T$ such that $I$ an $J$ has the same multigraded Hilbert-series.
\end{definition}

\begin{remark} \label{rem.CSI}
    \cite[Theorem 1.16(5)]{CDG2015} The family of Cartwright-Sturmfels ideals is closed under any multigraded linear section.
\end{remark}

In the following theorem, we prove that the LSS ideal of a good forest is the Cartwright-Sturmfels ideal. 

\begin{theorem} \label{Thm.cs}
    Let $S=\mathbb{K}[x_{ij} \mid i\in [n], j \in[d]]$ be a polynomial ring with $\mathbb{Z}^n$ multigraded induced by $\deg(x_{ij})=\mathfrak{e_i} \in \mathbb{Z}^n$. Let $H=(V=[n],E)$ be a good forest. Then the ideal $L_H^\mathbb{K}(d)$ is a Cartwright-Sturmfels ideal. 
\end{theorem}
\begin{proof}
    First, we assume that the generators $f_e^{(d)}$ of $L_H^\mathbb{K}(d)$ form a regular sequence. Since for each edge $e \in E$, one adds a monomial $m_e$ to $f_e^{(d)}$ with the same degree such that $m_e$ and $m_{e'}$ are coprime if $e \neq e'$. Then, with a suitable term order, initial terms of $f_e^{(d)}$ are pairwise coprime monomials $m_e$. By Remark \ref{rem.CSI} it is enough to show that the ideal $(f_e^{(d)}+m_e \mid e\in E)$ is a Cartwright-Sturmfels ideal. Since $f_e^{(d)}$ of $L_H^\mathbb{K}(d)$ form a regular sequence the $K$-polynomial of $S/L_H^\mathbb{K}(d)$ is $F(x)=\prod_{\{i_1,\ldots,i_r\} \in E} (1-x_{i_1}\cdots x_{i_r}) \in \mathbb{Q}[x_1,\ldots,x_n]$. By Cartwright-Sturmfels$^*$ property, it is enough to show that the existence of a monomial ideal $I$ in the polynomial ring $S_1=\mathbb{K}[y_1,\ldots,y_n]$ with $\mathbb{Z}^n$ multigraded induced by $\deg(y_i)=\mathfrak{e_i} \in \mathbb{Z}^n$ such that $K$-polynomial of $I$ is $F(1-x_1,\ldots,1-x_n)$ as an $S_1$-module. We claim that $K$- polynomial of $I=\prod_{\{i_1,\ldots,i_r\} \in E}(y_{i_1},\ldots,y_{i_r})$ is $F(1-x_1,\ldots,1-x_n)$. That is, we need to prove that the tensor product $K_E = \otimes_{\{i_1,\ldots,i_r\}\in E}K_{\{i_1,\ldots,i_r\}}$ of the truncated Koszul complexes 
    $$K_{\{i_1,\ldots,i_r\}}: 0 \rightarrow S_1(-\mathfrak{e_{i_1}} \cdots -\mathfrak{e_{i_r}}) \rightarrow S_1(-\mathfrak{e_{i_1}}) \oplus \cdots \oplus S_1(-\mathfrak{e_{i_r}})$$
    associated to $y_{i_1},\ldots,y_{i_r}$ resolves the ideal $I$. Set edge set $E'=E \setminus \{\{a_1,\ldots,a_r\}\}$, and ideals  $I'= \prod_{\{i_1,\ldots,i_r\} \in E'}(y_{i_1},\ldots,y_{i_r})$ and $I''= (y_{a_1},\ldots,y_{a_r})$. Then, by induction on the number of edges, one has $K_{E'}$ resolves the ideal $I'$. Since the $H$ is a good forest, for an edge $\{a_1,\ldots,a_r\}$ one has $r-1$ variables of $y_{a_1},\ldots,y_{a_r}$ does not appear in the generators of $I'$. Then, $y_{a_1},\ldots,y_{a_r}$ forms a regular $I'$-sequence. Therefore, Tor$_{\ell}^{S_1}(I',I'')=0$ for all $\ell \geq 1$. Hence $K_E$ resolves $I' \otimes I''$ and $I' \otimes I'' = I'I''$. This proves that the ideal $L_H^\mathbb{K}(d)$ is a Cartwright-Sturmfels ideal.
\end{proof}

The following remark provides the necessary conditions for LSS-ideals of $r$-uniform hypergraphs to be complete intersections, and and the proof is analogous to \cite[Lemma 2.1]{ANC2023}. 
\begin{remark} \label{rem.nci}
    For an $r$-uniform hypergraph $H$, if $d > \Delta(H)$, then $L_H^\mathbb{K}(d)$ is not a complete intersection.
\end{remark}

In the following, we prove the radical complete intersection property of LSS-ideals of $r$-uniform good forest.

\begin{proof} [Proof of Theorem~\ref{thm.goodfor}]
     $(1)$. It follows from Theorem \ref{Thm.cs} that $L_H^{\mathbb{K}}(d)$ is a Cartwright–Sturmfels ideal. In particular, $L_H^{\mathbb{K}}(d)$ and all its initial ideals are radical for all $d$. Assertions $(2)$ and $(3)$ follows from Corollary \ref{pmd.tree}, Remark \ref{rem.nci} and Equation (\ref{eq.main}).
\end{proof}

\vspace{2mm}

\noindent {\bf Acknowledgement.} 

The authors would like to thank the anonymous referee for their helpful comments and valuable suggestions.
The first author is supported by the Scientific and Technological Research Council of Turkey T\"UB\.{I}TAK under the Grant No: 124F113. The second author is partially supported by the Anusandhan National Research Foundation (ANRF), Government of India, under the Core Research Grant (File No.: CRG/2023/007668).

\end{document}